\theoremstyle:=definition,remark,plain\do{%
        \expandafter\g@addto@macro\csname th@\theoremstyle\endcsname{%
            \addtolength\thm@preskip\parskip
            }%
        }
\newtheorem{theorem}{Theorem}[section]
\crefname{theorem}{theorem}{theorems}
\Crefname{theorem}{Theorem}{Theorems}
\newtheorem{assumption}[theorem]{Assumption}
\crefname{assumption}{assumption}{assumptions}
\Crefname{assumption}{Assumption}{Assumptions}
\newtheorem{prop}[theorem]{Proposition}
\crefname{prop}{proposition}{propositions}
\Crefname{prop}{Proposition}{Propositions}
\newtheorem{lemma}[theorem]{Lemma}
\crefname{lemma}{lemma}{lemmas}
\Crefname{lemma}{Lemma}{Lemmas}
\newtheorem{corollary}[theorem]{Corollary}
\crefname{corollary}{corollary}{corollaries}
\Crefname{corollary}{Corollary}{Corollaries}
\theoremstyle{definition}
\newtheorem{remark}[theorem]{Remark}
\crefname{remark}{remark}{remarks}
\Crefname{remark}{Remark}{Remarks}
\theoremstyle{definition}
\newtheorem{defn}[theorem]{Definition}
\crefname{defn}{definition}{definitions}
\Crefname{defn}{Definition}{Definitions}
\newtheorem{example}[theorem]{Example}
\crefname{example}{example}{example}
\Crefname{example}{Example}{Examples}
\newcommand{\field}{\mathbb{F}}
\newcommand{\GL}[2]{\mathrm{GL}_{#1}(#2)}
\newcommand{\E}{E}
\newcommand{\A}{A}
\newcommand{\B}{D}
\newcommand{\D}{B}
\newcommand{\f}{f}
\newcommand{\state}{x}
\newcommand{\z}{z}
\newcommand{\derv}[2]{#1^{(#2)}}
\newcommand{\stateDim}{n}
\newcommand{\delay}{\tau}
\newcommand{\shift}[1]{\sigma_{#1}}
\newcommand{\history}{\phi}
\newcommand{\timeInt}{\mathbb{I}}
\newcommand{\tf}{t_\mathrm{f}}
\newcommand{\timeIntEx}{[0,\tf]}
\newcommand{\maxMOS}{M}
\newcommand{\mosIndexSet}{\mathcal{I}}
\newcommand{\mosIndexSetEx}{\{1,\ldots,\maxMOS\}}
\newcommand{\mos}[2]{#1_{#2}}
\newcommand{\dmos}[2]{\dot{#1}_{#2}}
\newcommand{\mosf}[1]{\mos{\f}{#1}}
\newcommand{\mosfDAE}[1]{\mos{\tilde{\f}}{#1}}
\newcommand{\WCFL}{S}
\newcommand{\WCFR}{T}
\newcommand{\ndif}{\stateDim_{\mathrm{d}}}
\newcommand{\nalg}{\stateDim_{\mathrm{a}}}
\newcommand{\J}{J}
\newcommand{\N}{N}
\newcommand{\ind}{\mathrm{ind}}
\newcommand{\indEA}{{\nu}}
\newcommand{\sstate}{v}
\newcommand{\ssf}{g}
\newcommand{\smosf}[1]{\mos{\ssf}{#1}}
\newcommand{\smosfDAE}[1]{\mos{\tilde{\ssf}}{#1}}
\newcommand{\fstate}{w}
\newcommand{\ff}{h}
\newcommand{\fmosf}[1]{\mos{\ff}{#1}}
\newcommand{\fmosfDAE}[1]{\mos{\tilde{\ff}}{#1}}
\newcommand{\shistory}{\psi}
\newcommand{\fhistory}{\eta}
\newcommand{\Bd}{\B_\mathrm{d}}
\newcommand{\Ba}{\B_\mathrm{a}}
\newcommand{\Bdd}{\B_{\mathrm{d},1}}
\newcommand{\Bda}{\B_{\mathrm{d},2}}
\newcommand{\Bad}{\B_{\mathrm{a},1}}
\newcommand{\Baa}{\B_{\mathrm{a},2}}
\newcommand{\Adiff}{\A^{\mathrm{diff}}}
\newcommand{\Acon}{\A^{\mathrm{con}}}
\newcommand{\C}[1]{C_{#1}}
\newcommand{\Bk}[1]{\B_{#1}}
\newcommand{\Dk}[1]{\D_{#1}}
\newcommand{\retarded}{smoothing type}
\newcommand{\neutral}{discontinuity invariant type}
\newcommand{\advanced}{de-smoothing type}
\newcommand{\indicator}[1]{\Delta_{#1}}
\newcommand{\bE}{\mathcal{E}}
\newcommand{\bA}{\mathcal{A}}
\newcommand{\bB}{\mathcal{B}}
\newcommand{\bstate}{\zeta}
\newcommand{\bff}{\mathcal{F}}
\title{Discontinuity propagation in delay differential-algebraic equations\footnotemark[1]}
\author{Benjamin Unger\footnotemark[2]}
\date{\today}
\begin{document}

\maketitle
\renewcommand{\thefootnote}{\fnsymbol{footnote}}
\footnotetext[1]{Research supported by the DFG Collaborative Research Center 910 \emph{Control of self-organizing nonlinear systems: Theoretical methods and concepts of application}, project A2.}
\footnotetext[2]{
Institut f\"ur Mathematik,
Technische Universität Berlin, Str.\ des 17.~Juni~136,
10623~Berlin,
Federal Republic of Germany,
\texttt{unger@math.tu-berlin.de}.}
%
\begin{abstract}
 	The propagation of primary discontinuities in initial value problems for linear delay differential-algebraic equations (DDAEs) is discussed. Based on the (quasi-) Weierstra{\ss} form for regular matrix pencils, a complete characterization of the different propagation types is given and algebraic criteria in terms of the matrices are developed. The analysis, which is based on the method of steps, takes into account all possible inhomogeneities and history functions and thus serves as a worst-case scenario. Moreover, it reveals possible hidden delays in the DDAE and allows to study exponential stability of the DDAE based on the spectral abscissa. The new classification for DDAEs is compared to existing approaches in the literature and the impact of splicing conditions on the classification is studied.
\vskip .3truecm

{\bf Keywords:} Delay differential-algebraic equations, Differential-Algebraic Equations, Classification of DDAEs, Primary Discontinuities, Splicing Conditions, Exponential Stability
\vskip .3truecm

{\bf AMS(MOS) subject classification:} 34A09, 34A12, 34K06, 65H10
\end{abstract}


\section{Introduction}
In this paper we study \emph{delay differential-algebraic equations} (DDAEs) of the form
\begin{subequations}
	\label{eq:IVP}
	\begin{equation}
		\label{eq:DDAE}
		\E\dot{\state}(t) = \A\state(t) + \B\state(t-\delay) + \f(t)
	\end{equation}
	in the time interval $\timeInt\vcentcolon=\timeIntEx$, where $\E,\A,\B\in\field^{\stateDim,\stateDim}$ are matrices, $f:\timeInt\to\field^{\stateDim}$ is the inhomogeneity, $\dot{\state}$ denotes the time derivative $\mathrm{d}/\mathrm{d}t$ of $x$ from the right, and the field $\field$ is either the complex or the real numbers, i.\,e., $\field\in\{\mathbb{R},\mathbb{C}\}$. Often, \eqref{eq:DDAE} is formulated as an initial value problem (IVP), i.\,e., we equip \eqref{eq:DDAE} with the initial condition 
	\begin{equation}
		\label{eq:initialCondition}
		\state(t) = \history(t)\qquad\text{for } t\in[-\delay,0]
	\end{equation}
with \emph{history function} $\history:[-\tau,0]\to\field^{\stateDim}$.
\end{subequations}
DDAEs of the form \eqref{eq:DDAE} arise as linearization of the nonlinear implicit equation 
\begin{displaymath}
	F(t,\state(t),\dot{\state}(t),\state(t-\delay)) = 0
\end{displaymath}
around a nominal stationary solution. Typical applications are nonlinear optics, chemical reactor systems and delayed feedback control (see \cite{Ern09} and the references therein). Moreover, the DDAE \eqref{eq:DDAE} may result from a realization of a transport-dominated phenomenon \cite{SchU16,SchUBG18}.

It is well-known, that the history function $\history$ may not be linked smoothly to the solution $\state$ at $t = 0$. More precisely, we have
\begin{equation}
	\label{eq:discontinuity}
	\lim_{t\nearrow0}\dot{\history}(t)\neq \lim_{t\searrow0} \dot{\state}(t)
\end{equation}
in general. Due to the delay, this so called \emph{primary discontinuity} \cite{BelZ03} is propagated to integer multiples of the delay $\delay$. Thus a rigorous analysis of the regularity of the solution is important for any kind of numerical integrator that is based on a Taylor series expansion of the solution. If $\E = I_{\stateDim}$ is the $\stateDim\times\stateDim$ identity matrix, the DDAE \eqref{eq:DDAE} is called \emph{retarded delay differential equation} (rDDE) and it is well-known that in this case, the primary discontinuities are smoothed out, i.\,e. if 
\begin{displaymath}
	\lim_{t\nearrow k\delay}\state^{(j-1)}(t) = \lim_{t\searrow k\delay} \state^{(j-1)}(t)\qquad\text{and}\qquad \lim_{t\nearrow k\delay}\state^{(j)}(t)\neq \lim_{t\searrow k\delay} \state^{(j)}(t)
\end{displaymath}
holds for some $j,k\in\mathbb{N}$, then we have
\begin{displaymath}
	\lim_{t\nearrow (k+1)\delay}\state^{(j)}(t)= \lim_{t\searrow (k+1)\delay} \state^{(j)}(t),
\end{displaymath}
provided that $\f$ is smooth enough. This situation is specific to the case that the matrix $\E$ is nonsingular. If in contrast, the matrix $\E$ is singular, the situation is completely different (see also \cite{Cam91}), as the following two examples suggest.
\begin{example}
	\label{ex:neutral}
	Let $\field = \mathbb{R}$, $\stateDim = 1$, $\E=0$, $\A = 1$, $\B = 1$, $\f\equiv 1$, $\delay=1$, and $\history(t) = t$. Then 
	\begin{displaymath}
		\state(t) = \begin{cases}
			k-1-t, & \text{if}\ k-1\leq t\leq k\ \text{and}\ k\in\mathbb{N}\ \text{odd},\\
			t+k, & \text{if}\ k-1\leq t\leq k\ \text{and}\ k\in\mathbb{N}\ \text{even}
		\end{cases}
	\end{displaymath}
	solves the initial values problem  \eqref{eq:IVP}. In particular, $\state$ is continuous for $t>-1$ but $\dot{\state}$ is discontinuous at every $t = k$ and thus no smoothing occurs.
\end{example}
\begin{example}
	\label{ex:advanced}
	Let $\field = \mathbb{R}$, $\stateDim =2$, $\f\equiv 0$, $\delay=1$, and
	\begin{displaymath}
		\E = \begin{bmatrix}
			1 & 0\\
			0 & 0
		\end{bmatrix},\qquad \A = \begin{bmatrix}
			0 & 1\\
			1 & 0
		\end{bmatrix},\qquad \B = \begin{bmatrix}
			0 & 0\\
			0 & -1
		\end{bmatrix},\qquad
		\history(t) = \begin{bmatrix}
			\frac{1}{3}(t-1)^3 + (t-1)^2 -1,\\
			\frac{1}{3}t^3 + t^2 -1
		\end{bmatrix}.
	\end{displaymath}
	Denoting the second component of $\state$ with $\state_2$, the DDAE \eqref{eq:DDAE} implies
	\begin{displaymath}
		\state_2(t) = \begin{cases}
			t^2 -1, & t\in[0,1],\\
			2t -2, & t\in[1,2],\\
			2, & t\in[2,3),\\
			0, & t\geq 3.
		\end{cases}
	\end{displaymath}
	In particular, the solution becomes less smooth at multiples of the time delay and even discontinuous at $t=3$.
\end{example}
Following the classification for scalar delay differential equations (DDEs) proposed in \cite{BelC63}, we observe, that the DDAE in \Cref{ex:neutral} is of \emph{neutral type}, while the second component in \Cref{ex:advanced} satisfies a DDE of \emph{advanced type}. The reason for this behavior is the so called \emph{index} of the differential-algebraic equation (DAE) that is encoded with the matrix pencil $(\E,\A)$. The index is, roughly speaking, a measure for the smoothness requirements for the inhomogeneity $\f$ for a solution to exist. For a detailed analysis of the different index concepts, we refer to \cite{Meh15,KunM06}.

The different classification approaches for DDAEs present in the literature, are either restricted to DDAEs in Hessenberg from with index less or equal three \cite{AscP95}, or are based on the so called \emph{underlying DDE} \cite{HaM15}. In particular, neither of the approaches reflects the propagation of primary discontinuities and the effect of so called \emph{splicing conditions} \cite{BelZ03} on the regularity of the solution. The main contributions of this work are the following:
\begin{enumerate}
	\item We introduce a new classification for DDAE based on the propagation of primary discontinuities (\Cref{def:classification}) and give a complete characterization of the propagation of discontinuities in terms of the matrices $\E,\A$, and $\B$ in \eqref{eq:DDAE}, cf. \Cref{thm:classificationDDAE}.
	\item In \Cref{cor:hiddenDelays} we show that multiple delays might be hidden in \eqref{eq:DDAE} and provide a reformulation that is suitable for the stability analysis. Moreover, we show that the new classification provides a sufficient condition to analyze the stability of the DDAE in terms of the spectral abscissa (\Cref{cor:stability}).
	\item \Cref{ex:weekleDeSmoothing} illustrates that splicing conditions can have an impact on the solvability of DDAEs. Moreover, we characterize sufficient conditions for DDAEs up to index $3$ to have a unique solution (cf. \Cref{thm:index3}).
	\item We show (\Cref{cor:classification}) that in some sense the classification introduced \cite{HaM15} is an upper bound for the classification introduced in this paper.
\end{enumerate}

\section*{Nomenclature}
\begin{center}
\begin{longtable}{p{42pt}p{12pt}p{320pt}}
	$\mathbb{N}$ && the set of natural numbers\\
	$\mathbb{N}_0$ & $\vcentcolon=$ & $\{0\}\cup\mathbb{N}$\\
	$I_{\stateDim}$ && identity matrix of size $\stateDim\times \stateDim$\\
	$\mosIndexSet$ &$\vcentcolon=$& $\mosIndexSetEx$\\
	$\timeInt$ & $\vcentcolon=$ & $\timeIntEx \subseteq [0,\maxMOS\delay]$\\
	$\field$ && either the field of real numbers $\mathbb{R}$ or the field of complex numbers $\mathbb{C}$\\
	$\field^{\stateDim,m}$ && matrices of size $\stateDim\times m$ over the field $\field$\\
	$\GL{n}{\field}$ & $\vcentcolon=$ & $\{A\in \field^{\stateDim,\stateDim}\mid A$ nonsingular $\}$\\
	$\shift{\delay}$ && shift (backward) operator: $(\shift{\delay}\state)(t) \vcentcolon= \state(t-\delay)$\\
	$\dot{\state}$ &$\vcentcolon=$& $\frac{\mathrm{d}}{\mathrm{d}t}\state$, the derivative of $\state$ from the right\\
	$\derv{\state}{j}$ &$\vcentcolon=$& $\left(\frac{\mathrm{d}}{\mathrm{d}t}\right)^jx$\\[0.2em]
	$\mos{\state}{i}(t)$ &$\vcentcolon=$ & $\state(t+(i-1)\delay)$ for $t\in[0,\delay]$\\
	$\mathcal{C}^k(\timeInt,\field^{\stateDim})$ && the vector space of all $k$-times continuously differentiable functions from the real interval $\timeInt$ into $\field^{\stateDim}$\\
	$\derv{\state}{j}(t^-)$ & $\vcentcolon=$ & $\lim_{s\nearrow t} \derv{\state}{j}(s)$\\
	$\derv{\state}{j}(t^+)$ & $\vcentcolon=$ & $\lim_{s\searrow t} \derv{\state}{j}(s)$\\
	$\indicator{(t_1,t_2]}$ & $\vcentcolon=$ & $\begin{cases}1,&\text{if } t\in(t_1,t_2],\\0,&\text{otherwise}.\end{cases}$ 
\end{longtable}
\end{center}

\section{Preliminaries and DAE theory}
\label{sec:prelim}
In this section, we review basic facts about DAE theory for linear time invariant systems. For convenience, we omit the time argument whenever possible and use the shift (backward) operator $\shift{\delay}$ defined via
\begin{displaymath}
	(\shift{\delay} \state)(t) = \state(t-\delay)
\end{displaymath}
instead, such that such that \eqref{eq:DDAE} is given by
\begin{equation}
	\label{eq:DDAE2}
	\E\dot{\state} = \A\state + \B\shift{\delay}\state + \f.
\end{equation}
Note that the formulation of the DDAE \eqref{eq:DDAE2} is not restricted to one single delay, since multiple commensurate delays \cite{Glu02} may be rewritten as a single delay by introducing new variables \cite{Ha15}. A standard approach to solve the initial value problem \eqref{eq:IVP} is via successive integration of \eqref{eq:DDAE2} on the time intervals $[(i-1)\delay,i\delay]$, which is sometimes referred to as \emph{method of steps} \cite{HaM15}. More precisely, assume that $\maxMOS$ is the smallest integer such that $\tf < \maxMOS\delay$ and introduce for $i\in\mosIndexSet \vcentcolon= \mosIndexSetEx$ the functions
\begin{equation}
	\label{eq:restriction}
	\begin{aligned}
	\mos{\state}{i}&:[0,\delay]\to\field^{\stateDim},\qquad & t &\mapsto (\shift{(1-i)\delay}\state)(t) =  \state(t+(i-1)\delay),\\
	\mos{\f}{i}&:[0,\delay]\to\field^{\stateDim}, & t &\mapsto (\shift{(1-i)\delay}\f)(t) =  \f(t+(i-1)\delay),\\
	\mos{\state}{0}&:[0,\delay]\to\field^{\stateDim}, & t &\mapsto \history(t-\delay).
	\end{aligned}
\end{equation}
Then we have to solve for each $i\in\mosIndexSetEx$ the DAE 
\begin{subequations}
	\label{eq:sequenceDAEs}
	\begin{align}
		\label{eq:methodOfSteps}	
		\E\dmos{\state}{i}  &= \A\mos{\state}{i} + \mosfDAE{i}, & t\in[0,\delay],\\
		\label{eq:initialConditionDAE}\mos{\state}{i}(0) &= \mos{\state}{i-1}(\delay).
	\end{align}
\end{subequations}
with $\mosfDAE{i} \vcentcolon= \B\mos{\state}{i-1}+\mos{\f}{i}$. For the analysis of \eqref{eq:sequenceDAEs} we employ the following solution concept from \cite{KunM06}. A function $\mos{\state}{i}\in\mathcal{C}^1([0,\delay],\field^{\stateDim})$ is called a \emph{solution} of \eqref{eq:methodOfSteps}, if it satisfies \eqref{eq:methodOfSteps} pointwise. The function $\mos{\state}{i}\in\mathcal{C}^1([0,\delay],\field^{\stateDim})$ is called a \emph{solution of the initial value problem} \eqref{eq:sequenceDAEs} if it is a solution of \eqref{eq:methodOfSteps} and satisfies \eqref{eq:initialConditionDAE}. An initial condition $\mos{\state}{i-1}(\delay)$ is called \emph{consistent}, if the initial value problem \eqref{eq:sequenceDAEs} hast at least one solution. 

The solvability of \eqref{eq:methodOfSteps} is closely connected to the matrix pencil $(\E,\A)$ and the smoothness of the inhomogeneity $\mosfDAE{i}$. If the inhomogeneity or some of its derivatives are discontinuous at certain points, we call this a \emph{secondary discontinuity} \cite{BelZ03}. For a numerical integrator, the secondary discontinuities need to be included in the time grid. However, to simplify our discussion, we assume that $\mosfDAE{i}$ is arbitrarily smooth on $(0,\delay)$. A sufficient assumption to guarantee this is to assume the following.

\begin{assumption}
	\label{ass:smoothness} The history function $\history:[-\delay,0]\to\field^{\stateDim}$ and the inhomogeneity $\f:\timeInt\to\field^{\stateDim}$ are infinitely many times continuously differentiable.
\end{assumption}

Another important assumption that we make throughout the text is the following.

\begin{assumption}
	\label{ass:regular}
	The matrix pencil $(\E,\A)$ is \emph{regular}, i.\,e., there exists $\lambda\in\field$ such that $\det\left(\lambda \E-\A\right)\neq0$.
\end{assumption}

Invoking \Cref{ass:regular,ass:smoothness}, the IVP \eqref{eq:sequenceDAEs} has a classical solution (cf. \cite{KunM06} and the discussion below) if the initial condition $\mos{\state}{i-1}(\delay)$ satisfies some algebraic equation. Hereby, $\mos{\state}{i}$ is called a \emph{(classical) solution}, if $\mos{\state}{i}$ is continuously differentiable and satisfies \eqref{eq:methodOfSteps} pointwise. If \eqref{eq:sequenceDAEs} has a unique solution $\mos{\state}{i}$ for every $i\in\mosIndexSet$, we can construct the solution $\state$ of the IVP \eqref{eq:IVP} by setting $\state(t) = \mos{\state}{i}(t - (i-1)\delay)$ for $t\in [(i-1)\delay,i\delay]$. 

\begin{remark}
	If $(\E,\A)$ is not regular, it is still possible that the IVP \eqref{eq:IVP} has a unique solution. In this case, the DDAE is called \emph{noncausal} and under some technical assumptions \cite{HaM15} 
	provides algorithms to transform \eqref{eq:DDAE2} such that the transformed pencil $(\tilde{\E},\tilde{\A})$ is regular.
\end{remark}

If $(\E,\A)$ is regular, then we can characterize the smoothness requirements for the inhomogeneity $\mosfDAE{i} = \B\mos{\state}{i-1} + \mos{f}{i}$ in \eqref{eq:methodOfSteps} for a classical solution to exist. This characterization is based on the Weierstra{\ss} canonical form (cf. \cite{Gan59b}). A more general form that is also valid for $\field = \mathbb{R}$ is the quasi-Weierstra{\ss} from, introduced in \cite{BerIT12}, which is presented here in a slightly different form.

\begin{theorem}[Quasi-Weierstra{\ss} form]
	\label{thm:quasiWf}
	The matrix pencil $(\E,\A)$ is regular if and only if there exist matrices $\WCFL,\WCFR\in\GL{\stateDim}{\field}$ such that
	\begin{equation}
		\label{eq:WCFDAE}
		\WCFL\E\WCFR= \begin{bmatrix}
			I_{\ndif} & 0\\
			0 & \N
		\end{bmatrix}\qquad\text{and}\qquad 
		\WCFL\A\WCFR = \begin{bmatrix}
			\J & 0 \\
			0 & I_{\nalg}\end{bmatrix},
	\end{equation}
	where $\N\in\field^{\nalg,\nalg}$ is a nilpotent matrix with index of nilpotency $\indEA$ and $\J\in\field^{\ndif,\ndif}$. If $\nalg>0$, we call $\indEA$ the index of the pencil $(\E,\A)$ and write $\ind(\E,\A)\vcentcolon= \indEA$. Otherwise we set $\ind(\E,\A)\vcentcolon=0$.
\end{theorem}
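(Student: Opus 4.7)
I would prove the two implications separately. The backward direction is a short determinant computation; the forward direction---existence of the canonical form under regularity---is the substantive content and I would carry it out via the Wong sequence construction of \cite{BerIT12} that the statement attributes the form to.

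\emph{Backward direction.} If $\WCFL,\WCFR\in\GL{\stateDim}{\field}$ as claimed exist, a direct block computation gives
\[
\det(\lambda \E - \A) = \det(\WCFL)^{-1}\det(\WCFR)^{-1}\,\det(\lambda I_{\ndif} - \J)\,\det(\lambda \N - I_{\nalg}).
\]
Since $\N$ is nilpotent, $\det(\lambda \N - I_{\nalg}) = (-1)^{\nalg}$ is a nonzero constant in $\lambda$, so $\det(\lambda \E - \A)$ equals, up to a nonzero multiplicative constant, the characteristic polynomial of $\J$ and hence is not identically zero. Therefore $(\E,\A)$ is regular.

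\emph{Forward direction.} Assuming regularity, introduce the Wong sequences
\[
\mathcal{V}_0 := \field^{\stateDim}, \quad \mathcal{V}_{k+1} := \A^{-1}(\E\mathcal{V}_k); \qquad \mathcal{W}_0 := \{0\}, \quad \mathcal{W}_{k+1} := \E^{-1}(\A\mathcal{W}_k),
\]
which stabilize within $\stateDim$ steps at limit subspaces $\mathcal{V}_*$ and $\mathcal{W}_*$ of respective dimensions $\ndif$ and $\nalg$. Using regularity, one verifies that $\field^{\stateDim} = \mathcal{V}_* \oplus \mathcal{W}_*$, that $\E|_{\mathcal{V}_*}$ is a bijection onto $\E\mathcal{V}_* = \A\mathcal{V}_*$, and that $\A|_{\mathcal{W}_*}$ is a bijection onto $\A\mathcal{W}_* \subseteq \E\mathcal{W}_*$. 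Picking column bases $V$ of $\mathcal{V}_*$ and $W$ of $\mathcal{W}_*$, setting $\WCFR := [V,\, W]$, and choosing $\WCFL$ whose row blocks form the left inverses adapted to $[\E V,\, \E W]$ produces the required block-diagonal form: the differential block becomes $(I_{\ndif}, \J)$ with $\J := (\E V)^{-1}(\A V)$, and the algebraic block becomes $(\N, I_{\nalg})$ with $\N := (\A W)^{-1}(\E W)$. The nilpotency of $\N$ follows because the stabilization of $(\mathcal{W}_k)$ is equivalent to $\N^{\indEA} = 0$ for the stabilization index $\indEA \leq \nalg$.

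\emph{Main obstacle.} The decisive step is the direct-sum decomposition $\field^{\stateDim} = \mathcal{V}_* \oplus \mathcal{W}_*$ together with the bijectivity statements for $\E|_{\mathcal{V}_*}$ and $\A|_{\mathcal{W}_*}$; every subsequent step is bookkeeping and change of basis. An alternative route that bypasses the Wong formalism is to fix $\lambda_0\in\field$ with $\lambda_0 \E - \A$ nonsingular (which exists by regularity) and to Jordan-decompose $M := (\lambda_0 \E - \A)^{-1}\E$: the generalized eigenspaces of $M$ for its nonzero, respectively zero, eigenvalues coincide with $\mathcal{V}_*$ and $\mathcal{W}_*$, and the nilpotency of $\N$ is then exactly the nilpotency of $M$ on its zero-eigenspace component. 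Either way, the structural splitting is the single nontrivial lemma.
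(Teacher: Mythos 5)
The paper does not actually prove this theorem: it is imported, in slightly modified form, from \cite{BerIT12} (the classical complex case being the Weierstra{\ss}--Gantmacher canonical form), so there is no in-paper argument to compare against. Your plan reproduces exactly the proof of the cited reference: the backward direction via the determinant factorization, with $\det(\lambda\N - I_{\nalg}) = (-1)^{\nalg}$ because $\lambda\N$ is nilpotent, is correct and complete, and the Wong-sequence construction together with the splitting $\field^{\stateDim} = \mathcal{V}_*\oplus\mathcal{W}_*$ is indeed the one substantive lemma. Your alternative route through $M = (\lambda_0\E-\A)^{-1}\E$ is the classical argument and is also sound (over $\field=\mathbb{R}$ one splits off the generalized kernel of $M$ rather than taking a full Jordan form).

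Three details in your forward direction are stated wrongly, and one of them would derail the construction as written. First, the limit of the second Wong sequence satisfies $\mathcal{W}_* = \E^{-1}(\A\mathcal{W}_*)$, which gives $\E\mathcal{W}_*\subseteq\A\mathcal{W}_*$, not $\A\mathcal{W}_*\subseteq\E\mathcal{W}_*$ as you wrote; the corrected inclusion is precisely what makes your own definition $\N = (\A W)^{-1}(\E W)$ (i.e.\ solving $\A W\N = \E W$) meaningful. Similarly, $\E\mathcal{V}_* = \A\mathcal{V}_*$ should read $\A\mathcal{V}_*\subseteq\E\mathcal{V}_*$; equality fails whenever $\J$ is singular. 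Second, and more seriously, the left transformation must be $\WCFL = [\E V,\ \A W]^{-1}$, not a left inverse adapted to $[\E V,\ \E W]$: the latter matrix is singular whenever $\nalg>0$ and $\ind(\E,\A)=1$, since then $\E W = 0$, so no such left inverse exists. The invertibility of $[\E V,\ \A W]$, equivalently $\E\mathcal{V}_*\oplus\A\mathcal{W}_* = \field^{\stateDim}$, is the second half of the key lemma and requires regularity just as much as the splitting of the domain does. With these corrections your sketch is the standard proof.
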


Applying the matrices $\WCFL$ and $\WCFR$ to the DAE \eqref{eq:methodOfSteps} implies a one-to-one correspondence between solutions of \eqref{eq:methodOfSteps} and solutions of
\begin{subequations}
	\label{eq:slowFast}
	\begin{align}
		\label{eq:slow}\dmos{\sstate}{i} &= \J\mos{\sstate}{i} + \smosfDAE{i},\\
		\label{eq:fast}\N\dmos{\fstate}{i} &= \mos{\fstate}{i} + \fmosfDAE{i},
	\end{align}	
\end{subequations}
with
\begin{displaymath}
	\begin{bmatrix}
		\mos{\sstate}{i}\\\mos{\fstate}{i}
	\end{bmatrix}\vcentcolon = \WCFR^{-1}\mos{\state}{i} \quad\text{and}\  
	\begin{bmatrix}
		\smosfDAE{i}\\\fmosfDAE{i}
	\end{bmatrix} \vcentcolon = \WCFL\mosfDAE{i}.
\end{displaymath}
While \eqref{eq:slow} is a standard ordinary differential equation (ODE) in $\mos{\sstate}{i}$ that can be solved with the Duhamel integral, the so called \emph{fast subsystem} \eqref{eq:fast} has the solution
\begin{equation}
	\label{eq:solFast}
	\mos{\fstate}{i} = -\sum_{k=0}^{\indEA-1} \N^k \derv{\fmosfDAE{i}}{k}
\end{equation}
and hence the function $\fmosfDAE{i}$ must be $\indEA$ times continuously differentiable for a classical solution to exist (cf. \cite{KunM06}). In addition, a consistent initial condition $\mos{\fstate}{i}(0)$ must satisfy \eqref{eq:solFast}. Similar to \cite{TanT10}, we define the matrices
\begin{align*}
	\Adiff &\vcentcolon= \WCFR\begin{bmatrix}
		\J & 0\\
		0 & 0
	\end{bmatrix}\WCFR^{-1}, & \Acon &\vcentcolon= \WCFR\begin{bmatrix}
		I_{\ndif} & 0\\ 
		0 & 0
	\end{bmatrix}\WCFR^{-1},\\
	\C{0} &\vcentcolon= \WCFR\begin{bmatrix}
		I_{\ndif} & 0\\
		0 & 0
	\end{bmatrix}\WCFL, & \C{k} &\vcentcolon= -\WCFR\begin{bmatrix}
		0&0\\
		0 & \N^{k-1}
	\end{bmatrix}\WCFL
\end{align*}
for $k=1,\ldots,\ind(\E,\A)$.

\begin{prop}
	\label{prop:underlyingODE}
	Assume that the DAE \eqref{eq:methodOfSteps} satisfies \Cref{ass:regular,ass:smoothness}. Then any classical solution $\mos{\state}{i}$ of \eqref{eq:methodOfSteps} fullfills the so called \emph{underlying ODE}
	\begin{equation}
		\label{eq:underlyingODE}
		\dmos{\state}{i} = \Adiff\mos{\state}{i} + \sum_{k=0}^{\ind(\E,\A)} \C{k}\derv{\mosfDAE{i}}{k}.
	\end{equation}
	Conversely, let $\mos{\state}{i}$ be a classical solution of \eqref{eq:underlyingODE}. Then $\mos{\state}{i}$ is a solution of \eqref{eq:methodOfSteps} if and only if there exists $s\in[0,\delay]$ such that $\mos{\state}{i}(s)$ satisfies
	\begin{equation}
		\label{eq:consistencyCon1}
		\mos{\state}{i}(s) = \Acon\mos{\state}{i}(s) + \sum_{k=1}^{\ind(\E,\A)} \C{k} \derv{\mosfDAE{i}}{k-1}(s).
	\end{equation}
\end{prop}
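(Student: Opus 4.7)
The plan is to work in the quasi-Weierstra{\ss} coordinates from \Cref{thm:quasiWf}, verify both directions block by block, and then transfer the relations back to the original variables via the matrices $\Adiff$, $\Acon$, and $\C{k}$. Throughout, write $\begin{bmatrix}\mos{\sstate}{i}\\\mos{\fstate}{i}\end{bmatrix}=\WCFR^{-1}\mos{\state}{i}$ and $\begin{bmatrix}\smosfDAE{i}\\\fmosfDAE{i}\end{bmatrix}=\WCFL\mosfDAE{i}$, so that a classical solution of \eqref{eq:methodOfSteps} is equivalent to a pair $(\mos{\sstate}{i},\mos{\fstate}{i})$ satisfying \eqref{eq:slowFast}.

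For the forward direction, start from a classical solution and note that $\mos{\sstate}{i}$ automatically satisfies the ODE $\dmos{\sstate}{i}=\J\mos{\sstate}{i}+\smosfDAE{i}$, while $\mos{\fstate}{i}$ is given by the closed form \eqref{eq:solFast}. Differentiating \eqref{eq:solFast} yields $\dmos{\fstate}{i}=-\sum_{k=0}^{\indEA-1}\N^{k}\derv{\fmosfDAE{i}}{k+1}=-\sum_{k=1}^{\indEA}\N^{k-1}\derv{\fmosfDAE{i}}{k}$. Stacking the two time derivatives and multiplying from the left by $\WCFR$, one obtains
\[
\dmos{\state}{i}=\WCFR\begin{bmatrix}\J&0\\0&0\end{bmatrix}\WCFR^{-1}\mos{\state}{i}+\WCFR\begin{bmatrix}I_{\ndif}&0\\0&0\end{bmatrix}\WCFL\mosfDAE{i}+\sum_{k=1}^{\indEA}\WCFR\begin{bmatrix}0&0\\0&-\N^{k-1}\end{bmatrix}\WCFL\derv{\mosfDAE{i}}{k},
\]
which is exactly \eqref{eq:underlyingODE} by the definitions of $\Adiff$, $\C{0}$, and $\C{k}$.

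For the converse, suppose $\mos{\state}{i}$ solves \eqref{eq:underlyingODE}. Multiplying by $\WCFR^{-1}$ and using the block structure, the first block gives $\dmos{\sstate}{i}=\J\mos{\sstate}{i}+\smosfDAE{i}$, so $\mos{\sstate}{i}$ automatically satisfies \eqref{eq:slow}. The second block merely gives $\dmos{\fstate}{i}=-\sum_{k=0}^{\indEA-1}\N^{k}\derv{\fmosfDAE{i}}{k+1}$, which is the time-derivative of \eqref{eq:solFast} but not \eqref{eq:solFast} itself. Introduce the auxiliary function $\phi_i(t):=\mos{\fstate}{i}(t)+\sum_{k=0}^{\indEA-1}\N^{k}\derv{\fmosfDAE{i}}{k}(t)$; by the previous computation $\dot\phi_i\equiv 0$, so $\phi_i$ is constant on $[0,\delay]$. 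Consequently $\mos{\fstate}{i}$ satisfies \eqref{eq:fast} (equivalently, \eqref{eq:solFast}) if and only if $\phi_i(s)=0$ for some, or equivalently every, $s\in[0,\delay]$.

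It remains to translate $\phi_i(s)=0$ into the coordinate-free condition \eqref{eq:consistencyCon1}. Using $\mos{\state}{i}(s)=\WCFR\bigl[\begin{smallmatrix}\mos{\sstate}{i}(s)\\\mos{\fstate}{i}(s)\end{smallmatrix}\bigr]$, the definitions directly give $\Acon\mos{\state}{i}(s)=\WCFR\bigl[\begin{smallmatrix}\mos{\sstate}{i}(s)\\0\end{smallmatrix}\bigr]$ and $\C{k}\derv{\mosfDAE{i}}{k-1}(s)=-\WCFR\bigl[\begin{smallmatrix}0\\\N^{k-1}\derv{\fmosfDAE{i}}{k-1}(s)\end{smallmatrix}\bigr]$. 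Summing from $k=1$ to $\indEA$ and subtracting from $\mos{\state}{i}(s)$, the upper block vanishes identically and the lower block is precisely $\WCFR\bigl[\begin{smallmatrix}0\\\phi_i(s)\end{smallmatrix}\bigr]$. Since $\WCFR$ is invertible, \eqref{eq:consistencyCon1} at $s$ is equivalent to $\phi_i(s)=0$, completing the proof. The main bookkeeping obstacle is to align the index shifts between $\sum_{k=0}^{\indEA-1}$ (solution formula) and $\sum_{k=1}^{\indEA}$ (definition of $\C{k}$), and to verify that the block-diagonal projectors correctly reassemble $\Adiff$, $\Acon$, and $\C{k}$ after the $\WCFR$/$\WCFL$ conjugations.
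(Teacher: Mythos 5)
Your proof is correct and takes essentially the same route as the paper: the forward direction is identical, and in the converse your auxiliary function (constant because its derivative vanishes) is just a repackaging of the paper's variation-of-constants computation, which likewise shows that the deviation of $\mos{\fstate}{i}$ from \eqref{eq:solFast} is constant in time so that checking the algebraic constraint at a single $s$ suffices. The only cosmetic caveat is that the symbol $\phi$ is already reserved for the history function $\history$ in this paper, so your auxiliary function should be renamed.
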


\begin{proof}
	Let $\mos{\state}{i}$ be a classical solution of \eqref{eq:methodOfSteps} and $\WCFL,\WCFR\in\GL{n}{\field}$ be matrices that satisfy \eqref{eq:WCFDAE} of the quasi-Weierstra{\ss} form and set $\indEA \vcentcolon= \ind(\E,\A)$. Differentiation of \eqref{eq:solFast} yields
	\begin{align*}
		\dmos{\state}{i} &= \WCFR\begin{bmatrix}
			\dmos{\sstate}{i}\\\dmos{\fstate}{i}
		\end{bmatrix} = \WCFR\begin{bmatrix}
			\J\mos{\sstate}{i} + \smosfDAE{i}\\
			-\sum_{k=0}^{\indEA-1}\N^k \derv{\fmosfDAE{i}}{k+1}
		\end{bmatrix}\\
		&= \WCFR\begin{bmatrix}
			\J & 0\\
			0 & 0
		\end{bmatrix}\begin{bmatrix}
			\mos{\sstate}{i}\\
			\mos{\fstate}{i}
		\end{bmatrix} + \WCFR\begin{bmatrix}
			I_{\ndif} & 0\\
			0 & 0
		\end{bmatrix}\begin{bmatrix}
			\smosfDAE{i}\\
			\fmosfDAE{i}
		\end{bmatrix} - \sum_{k=1}^{\indEA} \WCFR\begin{bmatrix}
			0 & 0\\
			0 & \N^{k-1}
		\end{bmatrix}\begin{bmatrix}
			\derv{\smosfDAE{i}}{k}\\
			\derv{\fmosfDAE{i}}{k}
		\end{bmatrix}\\
		&= \Adiff\mos{\state}{i} + \sum_{k=0}^{\indEA}\C{k}\derv{\mosfDAE{i}}{k}.
	\end{align*}
	Conversely, let $\mos{\state}{i}$ be a classical solution of \eqref{eq:underlyingODE}. Then there exists $\mos{\state}{i}(0)\in\field^{\stateDim}$ such that
	\begin{equation}
		\label{eq:variationOfConstants}
		\mos{\state}{i}(t) = \mathrm{e}^{\Adiff t}\mos{\state}{i}(0) + \int_0^t \mathrm{e}^{\Adiff(t-s)}\sum_{k=0}^{\indEA}\C{k}\derv{\mosfDAE{i}}{k}(s)\mathrm{d}s.
	\end{equation}
	Scaling \eqref{eq:variationOfConstants} from the left by $\WCFR^{-1}$ we obtain
	\begin{align*}
		\mos{\sstate}{i}(t) &= \mathrm{e}^{\J t}\sstate{i}(0) + \int_0^t \mathrm{e}^{\J(t-s)}\smosfDAE{i}(s)\mathrm{d}s\qquad\text{and}\\
		\mos{\fstate}{i}(t) &= \mos{\fstate}{i}(0) - \sum_{k=1}^{\indEA} \N^{k-1}\int_0^t \derv{\fmosfDAE{i}}{k}(s)\mathrm{d}s = \mos{\fstate}{i}(0) - \sum_{k=0}^{\indEA-1} \N^k \derv{\fmosfDAE{i}}{k}(t) + \sum_{k=0}^{\indEA-1} \N^k \derv{\fmosfDAE{i}}{k}(0).
	\end{align*}
	The condition \eqref{eq:consistencyCon1} implies the existence of $s\in[0,\delay]$ such that
	\begin{displaymath}
		\begin{bmatrix}
			\mos{\sstate}{i}(s)\\
			\mos{\fstate}{i}(s)
		\end{bmatrix} = \begin{bmatrix}
			\mos{\sstate}{i}(s)\\
			-\sum_{k=0}^{\indEA-1}\N^k\derv{\fmosfDAE{i}}{k}(s).
		\end{bmatrix}
	\end{displaymath}
	Together with \eqref{eq:solFast} this implies that $\mos{\state}{i}$ is a solution of \eqref{eq:methodOfSteps}.
\end{proof}

Setting $s=0$ in the previous proposition yields the following requirement for an initial condition to be consistent.
\begin{corollary}
	\label{cor:consistencyCondition}
	Assume that the DAE \eqref{eq:methodOfSteps} satisfies \Cref{ass:regular,ass:smoothness}. Then $\mos{\state}{i}(0)$ is consistent if and only if it satisfies the \emph{consistency condition}
	\begin{equation}
		\label{eq:consistencyCondition}
		\mos{\state}{i}(0) = \Acon\mos{\state}{i}(0) + \sum_{k=1}^{\ind(\E,\A)} \C{k}\derv{\mosfDAE{i}}{k-1}(0).
	\end{equation}
	In this case, the IVP \eqref{eq:sequenceDAEs} has a unique solution $\mos{\state}{i}\in\mathcal{C}^\infty([0,\delay],\field^{\stateDim})$.
\end{corollary}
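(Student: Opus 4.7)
The plan is to derive this essentially as a direct specialization of \Cref{prop:underlyingODE} with the distinguished point $s = 0$, combined with uniqueness for the slow/fast decomposition.

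For the necessity of \eqref{eq:consistencyCondition}, I would argue as follows. If $\mos{\state}{i}(0)$ is consistent, there exists by definition a solution $\mos{\state}{i}\in\mathcal{C}^1([0,\delay],\field^\stateDim)$ of \eqref{eq:sequenceDAEs}. By the forward direction of \Cref{prop:underlyingODE}, this $\mos{\state}{i}$ satisfies the underlying ODE \eqref{eq:underlyingODE}. But in fact, transforming to quasi-Weierstra{\ss} form, the fast component is forced by \eqref{eq:solFast} to satisfy $\mos{\fstate}{i}(t) = -\sum_{k=0}^{\indEA - 1}\N^k\derv{\fmosfDAE{i}}{k}(t)$ at \emph{every} $t\in[0,\delay]$, not just at one point; translating this identity back through $\WCFL,\WCFR$ yields \eqref{eq:consistencyCon1} for every $s\in[0,\delay]$. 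Setting $s=0$ produces \eqref{eq:consistencyCondition}.

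For the sufficiency, I would take the unique classical solution $\mos{\state}{i}$ of the underlying ODE \eqref{eq:underlyingODE} with prescribed initial value $\mos{\state}{i}(0)$, which exists and is $\mathcal{C}^\infty$ because $\Adiff$ is constant and $\mosfDAE{i}$ is smooth by \Cref{ass:smoothness} (the inhomogeneity $\mosfDAE{i} = \B\mos{\state}{i-1}+\mos{\f}{i}$ being $\mathcal{C}^\infty$ on $[0,\delay]$ under the standing smoothness hypothesis). The consistency condition \eqref{eq:consistencyCondition} provides a point $s=0$ at which \eqref{eq:consistencyCon1} is fulfilled, and hence the converse direction of \Cref{prop:underlyingODE} yields that this $\mos{\state}{i}$ also solves \eqref{eq:methodOfSteps}. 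Since it satisfies the given initial condition by construction, it is a solution of \eqref{eq:sequenceDAEs}, establishing consistency.

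For uniqueness and smoothness of the solution, I would work in the quasi-Weierstra{\ss} coordinates: the slow part $\mos{\sstate}{i}$ is the unique solution of the ODE \eqref{eq:slow} with the prescribed initial value, and the fast part $\mos{\fstate}{i}$ is pointwise determined by the explicit formula \eqref{eq:solFast}. Both are $\mathcal{C}^\infty$ under \Cref{ass:smoothness}, and therefore so is $\mos{\state}{i} = \WCFR[\mos{\sstate}{i}^\top,\mos{\fstate}{i}^\top]^\top$. I do not expect a real obstacle here; the only point that deserves care is verifying that the algebraic identity \eqref{eq:solFast} holding at $s=0$ propagates to the whole interval, which is exactly what the converse part of \Cref{prop:underlyingODE} delivers and which is the reason why the single-point consistency condition \eqref{eq:consistencyCondition} suffices.
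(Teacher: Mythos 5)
Your proposal is correct and follows essentially the same route as the paper, which derives the corollary simply by setting $s=0$ in \Cref{prop:underlyingODE}; your necessity argument via the pointwise validity of \eqref{eq:solFast}, the sufficiency via the converse direction of the proposition, and the uniqueness and $\mathcal{C}^\infty$ regularity via the slow/fast decomposition are exactly the details the paper leaves implicit. No gaps.
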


In order to reformulate \eqref{eq:underlyingODE} in terms of the delayed argument we introduce the matrices $\Bk{k}\vcentcolon= \C{k}\B$ for $k=0,\ldots,\ind(\E,\A)$. This yields the DDE
\begin{equation}
	\label{eq:underlyingDDE}
	\dot{\state} = \Adiff\state + \sum_{k=0}^{\ind(\E,\A)} \left(\Bk{k}\shift{\delay}\derv{\state}{k} + \C{k}\derv{\f}{k}\right),
\end{equation}
which we call the the \emph{underlying DDE} for the DDAE \eqref{eq:DDAE2}.

In contrast to DAEs, a classical solution concept is not reasonable for the DDAE \eqref{eq:DDAE2}, because the identity
\begin{displaymath}
	\lim_{t \searrow 0} \dot{\state}(t) =\vcentcolon \dot{\state}(0^+) = \dot{\history}(0^-)\vcentcolon= \lim_{t \nearrow 0} \dot{\history}(t)
\end{displaymath}
is in general not satisfied and this discontinuity in the first derivative at $t=0$ may propagate over time \cite{BelZ03}. Instead, we use the following solution concept.

\begin{defn}[Solution concept]
	Assume that the DDAE \eqref{eq:DDAE} satisfies \Cref{ass:regular,ass:smoothness}. We call $\state\in\mathcal{C}(\timeInt,\field^{\stateDim})$ a \emph{solution} of \eqref{eq:IVP} if for all $i\in\mosIndexSet$ the restriction $\mos{\state}{i}$ of $\state$ as in \eqref{eq:restriction} is a solution of \eqref{eq:sequenceDAEs}. We call the history function $\history$ \emph{consistent} if the initial value problem \eqref{eq:IVP} has at least one solution.
\end{defn}

From \Cref{cor:consistencyCondition} and the discussion thereafter we immediately observe that a necessary condition for a history function $\history$ to be consistent is that is satisfies the equation
\begin{equation}
	\label{eq:admissibleHistory}
	\history(0) = \Acon\history(0) + \sum_{k=0}^{\ind(\E,\A)}\left( \Bk{k}\derv{\history}{k}(-\delay) + \C{k}\derv{\f}{k}(0)\right).
\end{equation}
Unfortunately, as \Cref{ex:advanced} suggests, this condition is not sufficient for consistency, which gives raise to the following definition.

\begin{defn}
	Assume that the IVP \eqref{eq:IVP} with history function $\history:[-\delay,0]\to\field^{\stateDim}$ satisfies \Cref{ass:regular,ass:smoothness}. Then $\history$ is called \emph{admissible} for the IVP \eqref{eq:IVP} if $\mos{\state}{1}(0) = \history(0)$ is consistent for the DAE
	\begin{displaymath}
		\E\dmos{\state}{1}(t) = \A\mos{\state}{1}(t) + \B\history(t-\delay) + \mosf{1}(t),
	\end{displaymath}
	i.\,e.\ $\history$ satisfies \eqref{eq:admissibleHistory}.
\end{defn}

For the analysis in the upcoming section, we introduce
\begin{equation}
	\label{eq:matricesWCF}
	\begin{bmatrix}
		\Bd\\
		\Ba
	\end{bmatrix} \vcentcolon= \WCFL\B,\qquad
	\begin{bmatrix}
		\Bdd & \Bda\\
		\Bad & \Baa	
	\end{bmatrix} \vcentcolon= \WCFL\B\WCFR, \qquad
	\begin{bmatrix}
		\ssf\\ \ff
	\end{bmatrix} \vcentcolon= \WCFR\f, \qquad\text{and}\qquad
	\begin{bmatrix}
		\shistory\\\fhistory
	\end{bmatrix} \vcentcolon= \WCFR\history,
\end{equation}
where $\WCFL,\WCFR\in\GL{\stateDim}{\field}$ are matrices that satisfy \eqref{eq:WCFDAE} from the quasi-Weierstra{\ss} form (\Cref{thm:quasiWf}) and we use the same block dimensions as in \eqref{eq:WCFDAE}. Applying the matrices $\WCFL,\WCFR$ to \eqref{eq:DDAE2} yields
\begin{subequations}
	\label{eq:WCF}
	\begin{align}
		\label{eq:slowSystem}\dot{\sstate} &= \J\sstate + \Bdd\shift{\delay}\sstate + \Bda\shift{\delay}\fstate + \ssf,\\
		\label{eq:fastSystem} \N\dot{\fstate} &= \fstate + \Bad\shift{\delay}\sstate + \Baa\shift{\delay}\fstate + \ff.
	\end{align}
\end{subequations}

\section{Discontinuity propagation}

In this section we derive a classification for the DDAE \eqref{eq:DDAE} in terms of the propagation of primary discontinuities of solutions of the IVP \eqref{eq:IVP}. Recall that for an admissible history function $\history:[-\delay,0]\to\field^{\stateDim}$, \Cref{ass:regular,ass:smoothness} guarantee that there exists a number $\maxMOS\in\mathbb{N}$ and a unique sequence $(\mos{\state}{i})_{i\in\{0,\ldots,\maxMOS\}}$ that satisfies \eqref{eq:sequenceDAEs} (cf. \Cref{cor:consistencyCondition}). Hence for any $i\in\{1,\ldots,\maxMOS\}$ we can define the level $\ell_i$ of the primary discontinuity as 
\begin{equation}
	\label{eq:primDiscLevelL}
	\ell_i \vcentcolon= \min_{\f\in\mathcal{C}^\infty(\timeInt,\field^{\stateDim})} ~ \min_{\substack{\history\in\mathcal{C}^\infty([-\delay,0],\field^{\stateDim})\\\history\ \text{admissible}}} ~ \max\{\ell\in\mathbb{N}_0\mid \mos{\state}{i}\ \text{solves \eqref{eq:sequenceDAEs} and } \derv{\mos{\state}{i}}{\ell}(0^+) = \derv{\mos{\state}{i-1}}{\ell}(\delay^-)\}.
\end{equation}
For $i>\maxMOS$ we formally set $\ell_i \vcentcolon= -\infty$. Note that this definition is independent of the specific choice of the inhomogeneity $\f$ and the history $\history$ and thus serves as the worst-case scenario. To simplify the computation of the numbers $\ell_i$ we observe the following, which is a generalization of \cite[Theorem~7.1]{HalL93}

\begin{lemma}
	\label{lem:smoothSol}
	Assume that the IVP \eqref{eq:IVP} with admissible history function $\history:[-\delay,0]\to\field^{\stateDim}$ satisfies \Cref{ass:regular,ass:smoothness}. Then the solution $\state$ of \eqref{eq:IVP} is continuously differentiable on $[-\delay,\delay]$ if and only if $\history$ satisfies
	\begin{equation}
		\label{eq:smoothnessCondition}
		\dot{\history}(0) = \Adiff\history(0) + \sum_{k=0}^{\ind(\E,\A)}\left(\Bk{k}\derv{\history}{k}(-\delay) + \C{k}\derv{\f}{k}(0)\right).	
	\end{equation}
\end{lemma}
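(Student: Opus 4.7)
The plan is to reduce the $\mathcal{C}^1$-condition on $[-\delay,\delay]$ to a single matching condition at $t=0$, and then rewrite this matching condition using the underlying ODE of \Cref{prop:underlyingODE}.

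First I would exploit the smoothness already available: by \Cref{ass:smoothness} the history $\history$ is smooth on $[-\delay,0]$, and because $\history$ is admissible, \Cref{cor:consistencyCondition} guarantees that the unique solution $\mos{\state}{1}$ of the first step DAE belongs to $\mathcal{C}^\infty([0,\delay],\field^{\stateDim})$. Since $\state\big|_{[-\delay,0]}=\history$ and $\state\big|_{[0,\delay]}=\mos{\state}{1}$, continuity on the interior of each half–interval is automatic, as is continuous differentiability there. Continuity at the junction $t=0$ is the consistency/initial condition $\mos{\state}{1}(0)=\history(0)$. Hence $\state\in\mathcal{C}^1([-\delay,\delay],\field^{\stateDim})$ is equivalent to the single identity
\begin{equation*}
    \dmos{\state}{1}(0^+)=\dot{\history}(0^-).
\end{equation*}

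Next I would compute $\dmos{\state}{1}(0^+)$ via the underlying ODE. By \Cref{prop:underlyingODE},
\begin{equation*}
    \dmos{\state}{1}(t)=\Adiff\mos{\state}{1}(t)+\sum_{k=0}^{\ind(\E,\A)}\C{k}\derv{\mosfDAE{1}}{k}(t),
\end{equation*}
and since $\mosfDAE{1}(t)=\B\history(t-\delay)+\mosf{1}(t)$ with $\mosf{1}=\f\big|_{[0,\delay]}$, differentiating and evaluating at $t=0$ yields
\begin{equation*}
    \dmos{\state}{1}(0^+)=\Adiff\history(0)+\sum_{k=0}^{\ind(\E,\A)}\bigl(\C{k}\B\derv{\history}{k}(-\delay)+\C{k}\derv{\f}{k}(0)\bigr).
\end{equation*}
Using the definition $\Bk{k}=\C{k}\B$, the right-hand side is exactly the right-hand side of \eqref{eq:smoothnessCondition}. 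The equivalence is then immediate: $\state\in\mathcal{C}^1$ iff the displayed value equals $\dot{\history}(0)$, i.e.\ iff \eqref{eq:smoothnessCondition} holds.

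The only subtle point, and the main obstacle worth being careful about, is making sure nothing hides in the one-sided nature of the derivatives near $t=0$: admissibility only gives \eqref{eq:admissibleHistory}, which is a zeroth-order matching, so one has to argue that even though $\dot{\history}(0^-)$ is purely determined by the history and $\dmos{\state}{1}(0^+)$ by the DAE through the derivatives of $\mosfDAE{1}$ at $0$, both can in fact be read off from smooth extensions on their respective closed intervals (using \Cref{ass:smoothness} and \Cref{cor:consistencyCondition}). Once this is in place, the ``if'' direction is just substitution into the underlying ODE and the ``only if'' direction follows because \eqref{eq:smoothnessCondition} is precisely the equation that the single value $\dmos{\state}{1}(0^+)$ must satisfy.
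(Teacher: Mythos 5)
Your proposal is correct and follows essentially the same route as the paper: both reduce the $\mathcal{C}^1$-property on $[-\delay,\delay]$ to the single matching condition $\dmos{\state}{1}(0^+)=\dot{\history}(0)$ (using admissibility and \Cref{cor:consistencyCondition} for smoothness away from $t=0$), and then evaluate $\dmos{\state}{1}(0^+)$ via the underlying ODE of \Cref{prop:underlyingODE} with $\mosfDAE{1}=\B\shift{\delay}\history+\mosf{1}$ and $\Bk{k}=\C{k}\B$. Your extra remark about the one-sided derivatives is a fair point of care but does not change the argument.
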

\begin{proof}
	Since $\history$ is admissible, the initial condition $\mos{\state}{1}(0) = \history(0)$ is consistent and following \Cref{cor:consistencyCondition} the solution $\state$ exists on $[-\delay,\delay]$. Thus, it is sufficient to check the point $t=0$. Using \Cref{prop:underlyingODE} we can consider \eqref{eq:underlyingODE} and thus obtain
	\begin{align*}
		\dmos{\state}{1}(0^+) &= \Adiff\mos{\state}{1}(0) + \sum_{k=0}^{\ind(\E,\A)}\left(\Bk{k} \derv{\mos{\state}{0}}{k}(0) + \C{k}\derv{\mosf{1}}{k}(0)\right)\\
		&= \Adiff\history(0) + \sum_{k=0}^{\ind(\E,\A)}\left(\Bk{k}\derv{\history}{k}(-\delay) + \C{k}\derv{\mosf{1}}{k}(0)\right)
	\end{align*}
	and hence $\state$ is continuously differentiable on $[-\delay,\delay]$ if and only if $\history$ satisfies \eqref{eq:smoothnessCondition}.
\end{proof}
Since we require $\history\in\mathcal{C}^{\infty}([-\delay,0],\field^{\stateDim})$ to be admissible we immediately obtain $\ell_1\geq 0$. On the other hand assume that we have given the values $\history(0)$ and $\derv{\history}{k}(-\delay)$ for $k=0,\ldots,\nu$ such that $\history$ is admissible. Then we can always construct (via Hermite interpolation) $\history$ in such a way that \eqref{eq:smoothnessCondition} is not satisfied and hence $\ell_1\leq 0$, which yields $\ell_1 = 0$. Thus, the questions about propagation of discontinuities can be rephrased as whether there exists $k\in\mathbb{N}$ with $\ell_k>0$ (i.\,e.\ the solution becomes smoother), or there exists $k\in\mathbb{N}$ with $\ell_k = -\infty$ (i.\,e.\ the solution becomes less smooth), or if $\ell_i = \ell_1$ for all $i\in\mathbb{N}$. Note that the smoothing may not start immediately (i.\,e.\ we cannot ask for $\ell_1=1$), as the following example suggests.

\begin{example}
	\label{ex:slowSmoothing}
	Consider the DDAE given by $\field = \mathbb{R}$, $\stateDim =2$, $\f\equiv 0$, $\delay=1$, and
	\begin{displaymath}
		\E = \begin{bmatrix}
			1 & 0\\
			0 & 0
		\end{bmatrix},\qquad \A = \begin{bmatrix}
			0 & 0\\
			0 & 1
		\end{bmatrix},\qquad \B = \begin{bmatrix}
			0 & 1\\
			-1 & 0
		\end{bmatrix},\qquad
		\history(t) = \begin{bmatrix}
			t,\\
			-1
		\end{bmatrix}.
	\end{displaymath}
	Since $(\E,\A)$ is already in Weierstra\ss{} form, it is easy to see that the DDAE corresponds to the DDE
	\begin{equation}
		\label{eq:retardedSubequation}
		\dot{\sstate}(t) = \sstate(t-2\delay)
	\end{equation}
	with coupled equation $\fstate(t) = \sstate(t-\delay)$. Straight forward calculations show that $\ell_1\leq 0$ (using the specified history function $\history$) and $\ell_1\geq 0$ implying $\ell_1=0$. On the other, \eqref{eq:retardedSubequation} is a scalar delay equation and it is well-known, that the solution is continuously differentiable at $t=2\delay$, thus we have $\ell_2\geq 1$.
\end{example}

\begin{defn}[Classification]
	\label{def:classification}
	Consider the DDAE \eqref{eq:DDAE} on the interval $\timeInt = [0,\maxMOS\delay]$, set $\mosIndexSet \vcentcolon= \{1,\ldots,\maxMOS\}$, and suppose that \eqref{eq:DDAE} satisfies \Cref{ass:regular,ass:smoothness}. We say that \eqref{eq:DDAE} is of 
	\begin{itemize}
		\item \retarded{} if there exists $j\in\mosIndexSet$, $j>1$ such that $\ell_j=1$ and $\ell_i=0$ for $i<j$,
		\item \neutral{} if $\ell_i = 0$ for all $i\in\mosIndexSet$, and
		\item \advanced{} if there exists $j\in\mosIndexSet$, $j>1$ such that $\ell_j=-\infty$ and $\ell_i=0$ for $i<j$.
	\end{itemize}
\end{defn}

In the following, we analyze in detail the DDAE \eqref{eq:DDAE} and derive conditions for the matrices $\E,\A$, and $\B$, from which the type can be determined. Before we analyze the general DDAE case we focus on the case of $\ind(\E,\A)\leq1$, i.\,e., the system is a pure DDE or $N=0$ in \eqref{eq:fast}. Note that this case includes DDEs of the form
\begin{equation}
	\label{eq:neutralDDE}
	\dot{\hat{\state}}(t) = \hat{\A}\hat{\state}(t) + \hat{\B}\hat{\state}(t-\delay) + \hat{\D}\dot{\hat{\state}}(t-\delay) + \hat{\f}(t),
\end{equation}
with arbitrary matrices $\hat{\A},\hat{\B},\hat{\D}\in\field^{\stateDim,\stateDim}$. 

If $\ind(\E,\A) = 0$, then the matrix $\E$ is nonsingular and the DDAE is of the form
\begin{equation}
	\label{eq:retardedDDE}
	\dot{\state}(t) = \E^{-1}\A\state(t) + \E^{-1}\B\state(t-\delay) + \E^{-1}\f(t)
\end{equation}
and the ODE solution formula together with \Cref{lem:smoothSol} directly implies $\ell_1=1$, i.\,e.\ \eqref{eq:retardedDDE} is of \retarded{}.

\begin{theorem}
	\label{thm:index1}
	Consider the DDAE \eqref{eq:DDAE} on the interval $\timeInt = [0,\maxMOS\delay]$ and suppose that \Cref{ass:regular,ass:smoothness} hold. If $\ind(\E,\A)=1$, then \eqref{eq:DDAE} is of \retarded{} if and only if $\Baa$ in \eqref{eq:matricesWCF} is nilpotent with index of nilpotency $\indEA_{\B}$ and furthermore we have $\indEA_{\B}\leq\maxMOS-1$. 
\end{theorem}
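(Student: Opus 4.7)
The plan is to work in the quasi-Weierstra\ss{} coordinates \eqref{eq:WCF}. Since $\ind(\E,\A)=1$ forces $\N=0$, the fast subsystem \eqref{eq:fastSystem} collapses on step $i$ to the pointwise algebraic identity
\[
\mos{\fstate}{i}(t) = -\Bad\,\mos{\sstate}{i-1}(t) - \Baa\,\mos{\fstate}{i-1}(t) - \mos{\ff}{i}(t),\qquad t\in[0,\delay],
\]
while the slow subsystem \eqref{eq:slowSystem} remains an ODE for $\mos{\sstate}{i}$. Because $\state=\WCFR(\sstate,\fstate)^\top$, the level $\ell_i$ is the minimum of the regularities of $\sstate$ and $\fstate$ at the junction $t=(i-1)\delay$, so it suffices to track jumps in each block separately.

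Next I would set up the jump recursions. Writing $J^{(k)}_i[u]\vcentcolon=\mos{u}{i}^{(k)}(0^+)-\mos{u}{i-1}^{(k)}(\delay^-)$ and differentiating the two subsystems, one obtains for every $i\geq 2$ and $k\geq 0$
\[
J^{(k+1)}_i[\sstate]=\J\,J^{(k)}_i[\sstate]+\Bdd\,J^{(k)}_{i-1}[\sstate]+\Bda\,J^{(k)}_{i-1}[\fstate],\qquad J^{(k)}_i[\fstate]=-\Bad\,J^{(k)}_{i-1}[\sstate]-\Baa\,J^{(k)}_{i-1}[\fstate].
\]
Admissibility \eqref{eq:admissibleHistory} at $i=1$ enforces $J^{(0)}_1[\sstate]=J^{(0)}_1[\fstate]=0$; crucially, for $\ind(\E,\A)=1$ the equation \eqref{eq:admissibleHistory} does not constrain $\dot{\history}(0)$, so via a Hermite-type extension of the history the two parameters $J^{(1)}_1[\sstate]$ and $J^{(1)}_1[\fstate]$ may be prescribed independently in $\field^{\ndif}$ and $\field^{\nalg}$ while keeping $\history$ admissible. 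A direct induction on $i$ then yields $J^{(0)}_i[\sstate]=J^{(0)}_i[\fstate]=0$ and $J^{(1)}_i[\sstate]=0$ for all $i\geq 2$ (one ODE step smooths $\sstate$), together with the explicit formula
\[
J^{(1)}_j[\fstate]=(-\Baa)^{j-2}\bigl(-\Bad\,J^{(1)}_1[\sstate]-\Baa\,J^{(1)}_1[\fstate]\bigr),\qquad j\geq 2.
\]

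With this formula in hand the equivalence reduces to a nilpotency statement about $\Baa$. For the sufficient direction, nilpotency with $\indEA_{\B}\leq\maxMOS-1$ forces the appearing powers of $\Baa$ to vanish at $j=\indEA_{\B}+1\in\mosIndexSet$, so combined with the smoothing of $\sstate$ one obtains $\ell_j\geq 1$ for every admissible history; a specific witness history then verifies $\ell_i=0$ for $i<j$, certifying that the DDAE is of \retarded{}. For the converse I would argue by contradiction: if $\Baa$ is not nilpotent, pick a (generalized) eigenvector for a nonzero eigenvalue and realize it as $J^{(1)}_1[\fstate]$ via a Hermite history, so that $J^{(1)}_j[\fstate]\neq 0$ for every $j$; if $\Baa$ is nilpotent but $\indEA_{\B}>\maxMOS-1$, then $\Baa^{\maxMOS-1}\neq 0$, and installing any vector outside $\ker\Baa^{\maxMOS-1}$ as $J^{(1)}_1[\fstate]$ produces the same conclusion, contradicting the existence of a smoothing junction $j\leq\maxMOS$.

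The main obstacle I anticipate is precisely this converse direction: one must couple the linear-algebraic freedom to pick $J^{(1)}_1[\sstate]$ and $J^{(1)}_1[\fstate]$ with the nonlinear requirement that the resulting $\history$ still satisfy \eqref{eq:admissibleHistory}. The Hermite-type construction relies essentially on the observation that for $\ind(\E,\A)=1$ the admissibility equation constrains only $\history(0)$ and not $\dot{\history}(0)$, so both first-derivative jumps can be prescribed freely.
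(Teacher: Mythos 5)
Your strategy is the same as the paper's: pass to quasi-Weierstra\ss{} coordinates, use $\N=0$ to turn the fast block into a pointwise algebraic relation, establish $\ell_i\ge 0$ by induction, and then follow the jump of the first derivative of the fast variable through the junctions. Your converse direction (realizing a first-derivative jump outside $\ker\Baa^{\maxMOS-1}$, resp.\ outside the kernel of every power, via a Hermite extension of the history) is also how the paper argues, and your observation that for $\ind(\E,\A)=1$ admissibility constrains only $\history(0)$ and leaves $\dot{\history}(0)$ free is exactly the ingredient needed to make the worst case over histories effective.

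There is, however, an internal inconsistency in your sufficiency argument. Your explicit formula
\begin{displaymath}
J^{(1)}_j[\fstate]=(-\Baa)^{j-2}\bigl(-\Bad\,J^{(1)}_1[\sstate]-\Baa\,J^{(1)}_1[\fstate]\bigr),\qquad j\ge 2,
\end{displaymath}
does not give $J^{(1)}_{\indEA_{\B}+1}[\fstate]=0$ from nilpotency of $\Baa$ alone: at $j=\indEA_{\B}+1$ the term carrying $\Baa^{\indEA_{\B}}J^{(1)}_1[\fstate]$ vanishes, but the term $\pm\Baa^{\indEA_{\B}-1}\Bad\,J^{(1)}_1[\sstate]$ survives, and by your own construction $J^{(1)}_1[\sstate]$ ranges over all of $\field^{\ndif}$. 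So your formula only certifies smoothing at $j=\indEA_{\B}+2$, or at $j=\indEA_{\B}+1$ under the additional hypothesis $\Baa^{\indEA_{\B}-1}\Bad=0$ --- neither of which is what you (or the theorem) assert. The paper's proof obtains the cleaner recursion $J^{(1)}_{i+1}[\fstate]=-\Baa\,J^{(1)}_i[\fstate]$, and hence $J^{(1)}_{i+1}[\fstate]=(-1)^i\Baa^iJ^{(1)}_1[\fstate]$, by substituting the slow ODE for $\dmos{\sstate}{i}$ \emph{before} forming the jump, so that only zeroth-order (continuous) quantities and $\dmos{\fstate}{i}$ remain; but that substitution is licensed only on intervals where the slow ODE holds, i.e.\ for $i\ge 2$, whereas $\dmos{\fstate}{1}(\delay^-)$ involves $\dot{\shistory}(0^-)$, which is not governed by the ODE. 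Your more careful bookkeeping of the base case $J^{(1)}_2[\fstate]=-\Bad\,J^{(1)}_1[\sstate]-\Baa\,J^{(1)}_1[\fstate]$ is, as far as I can check, the correct one, so you must either explain why the $\Bad\,J^{(1)}_1[\sstate]$ contribution can be discarded or accept that the conclusion you draw does not follow from the formula you derived; as written, the two contradict each other.
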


\begin{proof}
	Let $\WCFL,\WCFR\in\GL{n}{\field}$ be matrices that transform \eqref{eq:DDAE} into quasi-Weierstra{\ss} form \eqref{eq:WCF}. Applying the method of steps yields
	\begin{displaymath}
		\dmos{\sstate}{i+1} = \J\mos{\sstate}{i+1} + \Bdd\mos{\sstate}{i} + \Bda\mos{\sstate}{i} + \smosf{i+1}\qquad\text{and}\qquad
		\mos{\fstate}{i+1} = -\Bad\mos{\sstate}{i} - \Baa\mos{\fstate}{i} - \fmosf{i+1}.
	\end{displaymath}
	Since $\ell_1=0$ we have
	\begin{align*}
		\mos{\fstate}{1}(\delay) &= -\Bad\mos{\sstate}{0}(\delay) - \Baa\mos{\fstate}{0}(\delay) - \fmosf{1}(\delay)\\
		&= -\Bad\mos{\sstate}{1}(0) - \Baa\mos{\fstate}{1}(0) - \fmosf{2}(0) = \mos{\fstate}{2}(0)
	\end{align*}
	and thus $\ell_2\geq 0$. By induction we conclude $\ell_i\geq 0$ for $i\in\mosIndexSet$. Moreover, we have
	\begin{align*}
		\dmos{\fstate}{i+1} &= -\Bad\dmos{\sstate}{i} - \Baa\dmos{\fstate}{i} - \dmos{\ff}{i+1} \\
		&= -\Bad\left(\J\mos{\sstate}{i} + \Bdd\mos{\sstate}{i-1} + \Bda\mos{\fstate}{i-1} + \smosf{i}\right) - \Baa\dmos{\fstate}{i} - \dmos{\ff}{i+1}
	\end{align*}
	which implies $\dmos{\fstate}{i+1}(0^+)-\dmos{\fstate}{i}(\delay^-) = \Baa\left(\dmos{\fstate}{i-1}(\delay^-)-\dmos{\fstate}{i}(0^+)\right)$ holds. By induction we have 
	\begin{displaymath}
		\dmos{\fstate}{i+1}(0^+)-\dmos{\fstate}{i}(\delay^-) = (-1)^i \Baa^i\left(\dmos{\fstate}{1}(0^+)-\dot{\fhistory}(0^-)\right)\qquad\text{for } i=1,\ldots,\maxMOS-1.
	\end{displaymath}
	Thus $\ell_{i+1} \geq 1$ holds if and only if $\Baa^i=0$. 
\end{proof}

Applying \Cref{thm:index1} to the DDAE in \Cref{ex:slowSmoothing} shows that this DDAE is of \retarded, since it is already in quasi-Weierstra{\ss} form with $\Baa=0$. Conversely, if the DDAE \eqref{eq:DDAE} with $\ind(\E,\A)=1$ is of \retarded, then the index of nilpotency indicates the number of delays present in the system. More precisely, we have the following result.

\begin{corollary}
	\label{cor:hiddenDelays}
	Suppose that the DDAE \eqref{eq:DDAE} satisfies \Cref{ass:regular,ass:smoothness} and is of \retarded{} with $\ind(\E,\A)\leq 1$. Furthermore let $\indEA_{\B}$ denote the index of nilpotency of $\Baa$ if $\nalg>0$ and $\indEA_{\B}=0$ otherwise. Then there exists matrices $\Dk{k}\in\field^{\ndif,\ndif}$ ($k=0,\ldots,\indEA_{\B}$) and an inhomogeneity $\vartheta$ such that the solution $\sstate$ of \eqref{eq:slowSystem} is a solution of the inital value problem
	\begin{subequations}
		\label{eq:correspondingRetardedIVP}
		\begin{align}
			\label{eq:correspondingDDE}
			\dot{\z}(t) &= \J\z + \sum_{k=0}^{\indEA_{\B}} \Dk{k} \z(t-(k+1)\delay) + \vartheta(t) && \mathrm{for}\ t\in(\indEA_{\B}\delay,\tf],\\
			\z(t) &= \sstate(t), && \mathrm{for }\ t\in[-\delay,\indEA_{\B}\delay].
		\end{align}
	\end{subequations}
\end{corollary}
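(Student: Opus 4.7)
The plan is to eliminate the fast variable $\fstate$ from the slow subsystem \eqref{eq:slowSystem} by algebraic unwinding, exploiting the fact that $\Baa$ is nilpotent in the \retarded{} case by \Cref{thm:index1}. Substituting the resulting closed form for $\shift{\delay}\fstate$ back into \eqref{eq:slowSystem} turns it into a pure delay equation in $\sstate$ alone, with an additional delayed term produced by each power of $\Baa$.

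First, I would use the fast-variable recurrence already derived in the proof of \Cref{thm:index1}, namely $\mos{\fstate}{i+1} = -\Bad\mos{\sstate}{i} - \Baa\mos{\fstate}{i} - \fmosf{i+1}$. Translated to the global variable this reads
\[
\fstate(t) = -\Bad\,\sstate(t-\delay) - \Baa\,\fstate(t-\delay) - \ff(t),
\]
which extends consistently across step boundaries because the \retarded{} hypothesis (via the induction $\ell_i \geq 0$ carried out inside the proof of \Cref{thm:index1}) guarantees continuity of $\fstate$ at integer multiples of $\delay$. Applying the recurrence to $\fstate(t-\delay)$ and iterating $\indEA_{\B}$ times produces a residual tail $(-\Baa)^{\indEA_{\B}}\fstate(t-(\indEA_{\B}+1)\delay)$ which vanishes by nilpotency, yielding
\[
\fstate(t-\delay) = \sum_{k=0}^{\indEA_{\B}-1}(-1)^{k+1}\Baa^{k}\Bad\,\sstate(t-(k+2)\delay) + \sum_{k=0}^{\indEA_{\B}-1}(-1)^{k+1}\Baa^{k}\ff(t-(k+1)\delay)
\]
for every $t \geq \indEA_{\B}\delay$ (so that all delayed arguments lie in the solution domain).

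Next, I would substitute this closed form into \eqref{eq:slowSystem}. The existing $\Bdd\sstate(t-\delay)$ term contributes $\Dk{0}\vcentcolon=\Bdd$, and re-indexing the $\sstate$-sum via $j=k+1$ gives $\Dk{j}\vcentcolon=(-1)^{j}\Bda\Baa^{j-1}\Bad$ for $j=1,\ldots,\indEA_{\B}$, each paired with the delay $(j+1)\delay$. The remaining inhomogeneous pieces are collected into
\[
\vartheta(t)\vcentcolon=\ssf(t) + \Bda\sum_{k=0}^{\indEA_{\B}-1}(-1)^{k+1}\Baa^{k}\ff(t-(k+1)\delay),
\]
which is smooth on $(\indEA_{\B}\delay,\tf]$ by \Cref{ass:smoothness}. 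The initial condition $\z(t)=\sstate(t)$ on $[-\delay,\indEA_{\B}\delay]$ is inherited directly from the DDAE solution, and the degenerate case $\nalg=0$ (so $\indEA_{\B}=0$) is immediate, since \eqref{eq:slowSystem} already has the form \eqref{eq:correspondingDDE} with $\Dk{0}=\Bdd$ and $\vartheta=\ssf$ when no fast subsystem is present.

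I do not anticipate a deep obstacle: the argument is a finite, terminating substitution driven entirely by $\Baa^{\indEA_{\B}}=0$. The only point requiring care is the bookkeeping of signs, powers of $\Baa$, and the shifted arguments throughout the unwinding, which is routine and can be verified by a short induction on the iteration count.
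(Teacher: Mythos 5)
Your proposal is correct and follows essentially the same route as the paper: both invert $I_{\nalg}+\Baa\shift{\delay}$ via the finite Neumann series afforded by $\Baa^{\indEA_{\B}}=0$, substitute the resulting expression for the fast variable into \eqref{eq:slowSystem}, and arrive at the same matrices $\Dk{0}=\Bdd$, $\Dk{k}=(-1)^k\Bda\Baa^{k-1}\Bad$ and the same $\vartheta$. The only cosmetic difference is that the paper tracks the history region with indicator functions $\indicator{(k\delay,\tf]}$ so the formula is stated on all of $\timeInt$, whereas you sidestep this by restricting to $t>\indEA_{\B}\delay$, which is exactly where \eqref{eq:correspondingDDE} is required to hold.
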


\begin{proof}
	The result is trivial for $\ind(\E,\A)=0$, i.\,e., assume $\ind(\E,\A) = 1$, which implies that $\N=0$ in \eqref{eq:WCF}. Let $\indicator{(t_0,t_1]}$ denote the characteristic function for the interval $(t_0,t_1]$, i.\,e.\
	\begin{displaymath}
		\indicator{(t_0,t_1]}(t) = \begin{cases} 1, & \text{if}\ t\in(t_0,t_1],\\
		0, & \text{otherwise}.
		\end{cases}
	\end{displaymath}
	Combination of the fast subsystem \eqref{eq:fastSystem} and the initial condition yields
	\begin{equation}
		\label{eq:solAlgebraic}
		(I_{\nalg} + \Baa\indicator{(\delay,\tf]}\shift{\delay})\fstate = -\Ba\indicator{(0,\delay]} \shift{\delay}\history - \Bad\indicator{(\delay,\tf]}\shift{\delay}\sstate - \ff.
	\end{equation}
	By induction we obtain $(\indicator{(\delay,\tf]}(t)\shift{\delay})^k = \indicator{(k\delay,\tf]}(t)\shift{k\delay}$ and from $\Baa^{\indEA_{\B}}=0$ we deduce
	\begin{displaymath}
		\left(\sum_{k=0}^{\indEA_{\B}-1}(-1)^k\left(\Baa\indicator{(\delay,\tf]}\shift{\delay}\right)^k\right) \left(I_{\nalg} + \Baa\indicator{(\delay,\tf]}\shift{\delay}\right) = I_{\nalg}
	\end{displaymath}
	such that $\fstate$ in \eqref{eq:solAlgebraic} is given by
	\begin{align*}
		\fstate &= \sum_{k=0}^{\indEA_{\B}-1}(-1)^{k+1} \left(\Baa\indicator{(\delay,\tf]} \shift{\delay}\right)^k \left(\Ba\indicator{(0,\delay]}\shift{\delay}\history + \Bad\indicator{(\delay,\tf]}\shift{\delay}\sstate + \ff\right)\\
		&= \sum_{k=0}^{\indEA_{\B}-1}(-1)^{k+1}\Baa^k \left(\Ba\indicator{(k\delay,(k+1)\delay]}\shift{(k+1)\delay} \history + \Bad\indicator{(k\delay,\tf]}\shift{(k+1)\delay} \sstate + \indicator{(k\delay,\tf]}\shift{k\delay} \ff\right).
	\end{align*}
	Inserting this identity in \eqref{eq:slowSystem} and introducing for $k=1,\ldots,\indEA_{\B}$ the matrices
	\begin{displaymath}
		\Dk{0} \vcentcolon= \Bdd,\qquad \Dk{k}\vcentcolon= (-1)^k \Bda\Baa^{k-1}\Bad
	\end{displaymath}
implies that the solution $\sstate$ of \eqref{eq:slowSystem} is a solution of the IVP \eqref{eq:correspondingRetardedIVP}, where $\vartheta$ is given by
	\begin{displaymath}
		\vartheta(t) \vcentcolon= \ssf(t) + \sum_{k=0}^{\indEA_{\B}-1}(-1)^{k+1}\Bda\Baa^k \ff(t-(k+1)\delay).\qedhere
	\end{displaymath}
\end{proof}

\begin{example}
	Consider the DDE \eqref{eq:neutralDDE}.	Introducing the shifted variable $y(t) = \hat{\state}(t-\delay)$ yields the DDAE 
	\begin{displaymath}
		\begin{bmatrix}
			-I_{\stateDim} & -\D\\
			0 & 0
		\end{bmatrix}\dot{\state}(t) = \begin{bmatrix}
			\A & 0\\
			0 & I_{\stateDim}
		\end{bmatrix}\state(t) + \begin{bmatrix}
			\B & 0\\
			-I_{\stateDim} & 0
		\end{bmatrix}\state(t-\delay) + \begin{bmatrix}
			\f\\0
		\end{bmatrix}.
	\end{displaymath}		
	The matrices $\WCFL \vcentcolon= \left[\begin{smallmatrix}
			I_{\stateDim} & -\A\D\\
			0 & I_{\stateDim}
		\end{smallmatrix}\right]$ and $\WCFR \vcentcolon= \left[\begin{smallmatrix}
			I_{\stateDim} & \D\\
			0 & I_{\stateDim}
		\end{smallmatrix}\right]$	transform the DDAE to quasi-Weierstra{\ss} form given by 
	\begin{displaymath}
		\begin{bmatrix}
			I_{\stateDim} & 0\\0 & 0
		\end{bmatrix}\begin{bmatrix}
			\dot{\sstate}(t)\\\dot{\fstate}(t)
		\end{bmatrix} = \begin{bmatrix}
			\A & 0\\0 & I_{\stateDim}
		\end{bmatrix}\begin{bmatrix}
			\sstate(t)\\\fstate(t)
		\end{bmatrix} + \begin{bmatrix}
			\B+\A\D & (\B+\A\D)\D\\ -I_{\stateDim} & -\D
		\end{bmatrix}\begin{bmatrix}
			\sstate(t-\delay)\\\fstate(t-\delay)
		\end{bmatrix} + \begin{bmatrix}
			\f(t)\\0
		\end{bmatrix}.
	\end{displaymath}
	Hence, the DDE \eqref{eq:neutralDDE} is of \retarded{} if and only if $\D$ is nilpotent. In this case, the corresponding retarded equation \eqref{eq:correspondingDDE} is given by
	\begin{displaymath}
		\dot{\z}(t) = \A\z(t) + (\B+\A\D)\z(t-\delay) + \sum_{k=1}^{\indEA_{\D}-1}(-1)^k (\B+\A\D)\D^k  \z(t-(k+1)\delay) + g(t),
	\end{displaymath}
	where $\indEA_{\D}$ is the index of nilpotency of $\D$.
\end{example}

\begin{remark}
	\label{rem:stability}
	The delay equation \eqref{eq:correspondingRetardedIVP} of \Cref{cor:hiddenDelays} may be used to determine whether the DDAE \eqref{eq:neutralDDE} is stable (which can be done for example via DDE-biftool \cite{EngLR02,SieELSR14}). Note that this provides an alternative way to the theory outlined in \cite{DuLMT13, DuLM13}.
\end{remark}

For the analysis of the general DDAE case with arbitrary index we use the following preliminary result.

\begin{prop}
	\label{prop:smoothSol2}
	Suppose that the IVP \eqref{eq:IVP} satisfies \Cref{ass:regular,ass:smoothness} and let $\WCFL,\WCFR\in\GL{\stateDim}{\field}$ be matrices that transform $(\E,\A)$ to quasi-Weierstra{\ss} form \eqref{eq:WCF}. Then for any $m\in\mathbb{N}$ and any $\tilde{\sstate}\in\field^{\ndif}$ there exists an admissible history function $\history = \WCFR^{-1}\smash{\begin{bmatrix}
		\shistory^T & \fhistory^T
\end{bmatrix}^T}$ that is analytic and satisfies
	\begin{subequations}
		\label{eq:conditionsSmoothPsi}
		\begin{align}
			\label{eq:conditionsSmoothPsi1}\derv{\shistory}{j}(0) &= \derv{\mos{\sstate}{1}}{j}(0+) &\text{for } j&=0,1,\ldots,m-1,\\
			\label{eq:conditionsSmoothPsi2}\derv{\fhistory}{j}(0) &= \derv{\mos{\fstate}{1}}{j}(0+) &\quad\text{for } j&=0,1,\ldots,m, &\text{and}\\
			\label{eq:conditionsSmoothPsi3}\tilde{\sstate} &= \derv{\shistory}{m}(0) - \derv{\mos{\sstate}{1}}{m}(0^+).
		\end{align}	 
	\end{subequations}
	Similarly for any $m\in\mathbb{N}$ and $\tilde{\fstate}\in\field^{\nalg}$ there exists an admissible and analytic history function $\history = \WCFR^{-1}\smash{\begin{bmatrix}
		\shistory^T & \fhistory^T
\end{bmatrix}^T}$ that satisfies
	\begin{subequations}	
		\label{eq:conditionsSmoothEta}	
		\begin{align}
			\label{eq:conditionsSmoothEta1}\derv{\shistory}{j}(0) &= \derv{\mos{\sstate}{1}}{j}(0+) &\text{for}\ j&=0,1,\ldots,m,\\
			\label{eq:conditionsSmoothEta2}\derv{\fhistory}{j}(0) &= \derv{\mos{\fstate}{1}}{j}(0+) &\quad\text{for}\ j&=0,1,\ldots,m-1, &\text{and}\\
			\label{eq:conditionsSmoothEta3}\tilde{\fstate} &= \derv{\fhistory}{m}(0) - \derv{\mos{\fstate}{1}}{m}(0^+).
		\end{align}
	\end{subequations}
\end{prop}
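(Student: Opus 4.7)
The plan is to construct $\history$ as a polynomial solving a two-point Hermite interpolation problem at $t=-\delay$ and $t=0$; the crux is a decoupling between the Taylor data at the two endpoints. Specifically, the right-limits $\derv{\mos{\sstate}{1}}{j}(0^+)$ and $\derv{\mos{\fstate}{1}}{j}(0^+)$ depend only on $\shistory(0)$, on the Taylor coefficients $\derv{\shistory}{k}(-\delay)$ and $\derv{\fhistory}{k}(-\delay)$, and on derivatives of $\f$ at $0$; they do not involve $\derv{\shistory}{k}(0)$ or $\derv{\fhistory}{k}(0)$ for $k\geq 1$. Furthermore, admissibility \eqref{eq:admissibleHistory} imposes only the single equation $\fhistory(0)=\mos{\fstate}{1}(0^+)$ and leaves every other Taylor coefficient of $\history$ at its endpoints free.

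To make this precise, induction on $j$ applied to the slow subsystem in \eqref{eq:WCF} after the method of steps gives
\begin{displaymath}
\derv{\mos{\sstate}{1}}{j+1}(0^+) = \J\derv{\mos{\sstate}{1}}{j}(0^+) + \Bdd\derv{\shistory}{j}(-\delay) + \Bda\derv{\fhistory}{j}(-\delay) + \derv{\smosf{1}}{j}(0),
\end{displaymath}
while the explicit formula \eqref{eq:solFast} applied to the fast subsystem yields
\begin{displaymath}
\derv{\mos{\fstate}{1}}{j}(0^+) = -\sum_{k=0}^{\indEA-1}\N^{k}\bigl(\Bad\derv{\shistory}{k+j}(-\delay)+\Baa\derv{\fhistory}{k+j}(-\delay)+\derv{\fmosf{1}}{k+j}(0)\bigr).
\end{displaymath}
Hence, fixing $\shistory(0)\in\field^{\ndif}$ together with arbitrary values of $\derv{\shistory}{k}(-\delay)$ and $\derv{\fhistory}{k}(-\delay)$ for $k=0,\ldots,m+\indEA-1$ already determines every right-limit appearing in \eqref{eq:conditionsSmoothPsi} and \eqref{eq:conditionsSmoothEta}.

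For the first part of the proposition, pick such left-endpoint data arbitrarily, compute the resulting right-limits, and prescribe the Taylor data at $t=0$ by $\fhistory(0):=\mos{\fstate}{1}(0^+)$ (forced by admissibility), $\derv{\shistory}{j}(0):=\derv{\mos{\sstate}{1}}{j}(0^+)$ for $j=1,\ldots,m-1$, $\derv{\shistory}{m}(0):=\derv{\mos{\sstate}{1}}{m}(0^+)+\tilde{\sstate}$, and $\derv{\fhistory}{j}(0):=\derv{\mos{\fstate}{1}}{j}(0^+)$ for $j=1,\ldots,m$. The $j=0$ cases of \eqref{eq:conditionsSmoothPsi1}--\eqref{eq:conditionsSmoothPsi2} then hold automatically, since $\shistory(0)=\mos{\sstate}{1}(0)=\mos{\sstate}{1}(0^+)$ by the initial condition. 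Standard two-point Hermite interpolation produces polynomial functions $\shistory$ and $\fhistory$ on $[-\delay,0]$ realizing these values and derivatives at both endpoints; the corresponding $\history$ is analytic (as a polynomial) and by construction satisfies admissibility as well as \eqref{eq:conditionsSmoothPsi}.

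The second half of the proposition is proved by the symmetric construction: prescribe $\derv{\shistory}{j}(0):=\derv{\mos{\sstate}{1}}{j}(0^+)$ for $j=1,\ldots,m$, $\derv{\fhistory}{j}(0):=\derv{\mos{\fstate}{1}}{j}(0^+)$ for $j=1,\ldots,m-1$, and $\derv{\fhistory}{m}(0):=\derv{\mos{\fstate}{1}}{m}(0^+)+\tilde{\fstate}$, with $\fhistory(0)$ again pinned by admissibility. The main (minor) obstacle is the dependence analysis in the second paragraph, which isolates precisely which Taylor coefficients of $\history$ are genuinely free; once that is in place, the remainder is a bookkeeping exercise combined with the classical existence of two-point Hermite interpolants.
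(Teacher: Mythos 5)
Your proposal is correct and follows essentially the same route as the paper: both arguments observe that the right-limits $\derv{\mos{\sstate}{1}}{j}(0^+)$ and $\derv{\mos{\fstate}{1}}{j}(0^+)$ are determined by the Taylor data of $\history$ at $-\delay$ (together with $\shistory(0)$ and derivatives of $\f$), via the slow-subsystem recursion and the explicit fast-subsystem formula \eqref{eq:solFast}, so that the Taylor coefficients of $\shistory$ and $\fhistory$ at $0$ remain free up to the single admissibility constraint and can be matched or offset by $\tilde{\sstate}$, respectively $\tilde{\fstate}$, before invoking two-point Hermite interpolation. The only cosmetic difference is that the paper obtains the same recursions by iterating \Cref{lem:smoothSol} and multiplying by $\WCFR^{-1}$, whereas you derive them directly from \eqref{eq:WCF}.
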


\begin{proof}
	Let $m\in\mathbb{N}$. By induction, \Cref{lem:smoothSol} implies that the solution $\state$ of the IVP \eqref{eq:IVP} is $m$ times continuously differentiable on $[-\delay,\delay]$ if and only if $\history$ satisfies
	\begin{equation}
		\label{eq:smoothTransition}
		\derv{\history}{j+1}(0) = \Adiff\derv{\history}{j}(0) + \sum_{k=0}^{\ind(\E,\A)-1}\left(\Bk{k}\derv{\history}{k+j}(-\delay) + \C{k}\derv{\f}{k+j}(0)\right)
	\end{equation}
	for $j=0,1,\ldots,m-1$. Multiply \eqref{eq:smoothTransition} from the left by $\WCFR^{-1}$ to obtain
	\begin{subequations}
		\label{eq:smoothnessConditions}
		\begin{align}
			\label{eq:smoothnessPsi}
			\derv{\shistory}{j+1}(0) &= \J\derv{\shistory}{j}(0) + \Bdd\derv{\shistory}{j}(-\delay) + \Bda\derv{\fhistory}{j}(-\delay) + \derv{\ssf}{j}(0),\\
			\label{eq:smoothnessEta}
			\derv{\fhistory}{j+1}(0) &= -\sum_{k=0}^{\ind(\E,\A)-1}\N^k\left(\Bad\derv{\shistory}{k+j+1}(-\delay) + \Baa\derv{\fhistory}{k+j+1}(-\delay) + \derv{\ff}{k+j+1}(0)\right)
		\end{align}
	\end{subequations}
	for $j=0,\ldots,m-1$. We then can proceed as follows to construct $\shistory$ and $\fhistory$ that satisfy conditions~\eqref{eq:conditionsSmoothPsi}. Choose any value for $\derv{\shistory}{k}(-\delay)$ and $\derv{\fhistory}{k}(-\delay)$ for $k=0,\ldots,\ind(\E,\A)+m$. This fixes the values $\derv{\fhistory}{k}(0)$ for $k=0,\ldots,m$ by \eqref{eq:smoothnessEta}. For an arbitrary $\shistory(0)$, set $\derv{\shistory}{j+1}(0)$ according to \eqref{eq:smoothnessPsi} for $j=0,\ldots,m-2$. Finally, set
	\begin{displaymath}
		\derv{\shistory}{m}(0) = \tilde{\sstate} - \left(\J\derv{\shistory}{j}(0) + \Bdd\derv{\fhistory}{j}(-\delay) + \Bda\derv{\fhistory}{j}(-\delay) + \derv{\ssf}{j}(0)\right).
	\end{displaymath}
	The desired history functions are then given via Hermite interpolation. The construction for $\shistory$ and $\fhistory$ that satisfy \eqref{eq:conditionsSmoothEta} proceeds analogously.
\end{proof}

Applying the method of steps and the solution formula \eqref{eq:solFast} for the fast subsystem yields
\begin{equation}
	\label{eq:fastSystemMethodSteps}
	\mos{\fstate}{i+1} = -\sum_{k=0}^{\ind(\E,\A)-1}\N^k\left(\frac{\mathrm{d}}{\mathrm{d}t}\right)^k\left(\Bad\mos{\sstate}{i} + \Baa\mos{\fstate}{i} + \fmosf{i+1}\right).
\end{equation}
Since \Cref{ass:smoothness} implies that all functions are sufficiently smooth we obtain
\begin{align*}
	\mos{\fstate}{2}(0^+)-\mos{\fstate}{1}(\delay^-) &= \sum_{k=0}^{\ind(\E,\A)-1}\N^k\left(\Bad\left(\derv{\shistory}{k}(0) - \derv{\mos{\sstate}{1}}{k}(0^+)\right) + \Baa\left(\derv{\fhistory}{k}(0)-\derv{\mos{\fstate}{1}}{k}(0^+)\right)\right)\\
	&= \sum_{k=0}^{\ind(\E,\A)-1} \N^k\Ba\WCFR\begin{bmatrix}
		\derv{\shistory}{k}(0) - \derv{\mos{\sstate}{1}}{k}(0^+)\\
		\derv{\fhistory}{k}(0) - \derv{\mos{\fstate}{1}}{k}(0^+)
	\end{bmatrix}\\
	&= \sum_{k=1}^{\ind(\E,\A)-1} \N^k\Ba\WCFR\begin{bmatrix}
		\derv{\shistory}{k}(0) - \derv{\mos{\sstate}{1}}{k}(0^+)\\
		\derv{\fhistory}{k}(0) - \derv{\mos{\fstate}{1}}{k}(0^+)
	\end{bmatrix},
\end{align*}
where the last identity follows from the fact the $\history$ is assumed to be admissible. \Cref{prop:smoothSol2} implies that \eqref{eq:DDAE} is of \advanced{} if there exists $k\in\{1,\ldots,\ind(\E,\A)-1\}$ such that $\N^k\Ba\neq0$. Assume conversely that $\N\Ba=0$. In this case \eqref{eq:fastSystemMethodSteps} is given by
\begin{displaymath}
	\mos{\fstate}{i+1} = -\Bad\mos{\sstate}{i} - \Baa\mos{\fstate}{i} - \sum_{k=0}^{\ind(\E,\A)-1} \N^k \derv{\fmosf{i+1}}{k},
\end{displaymath}
which implies $\ell_i\geq0$. Together with \Cref{thm:index1}, this proofs the following theorem.

\begin{theorem}
	\label{thm:classificationDDAE}
	Consider the DDAE \eqref{eq:DDAE} on the interval $\timeInt = [0,\maxMOS\delay]$ and suppose that \Cref{ass:regular,ass:smoothness} hold. Let $\N$, $\Ba$ and $\Baa$ be the matrices that are associated with the quasi-Weierstra{\ss} form \eqref{eq:WCF}. Then \eqref{eq:DDAE} is of
	\begin{itemize}
		\item \retarded{} if $\N\Ba=0$ and $\Baa$ is nilpotent with nilpotency index $\indEA_{\B}<\maxMOS$,
		\item \advanced{} if there exists $k\in\mathbb{N}$ such that $\N^k\Ba\neq0$, and
		\item \neutral{} otherwise.
	\end{itemize}
\end{theorem}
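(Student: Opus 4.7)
The plan is to bootstrap the three-case analysis directly off the calculations carried out in the paragraphs immediately before the theorem statement. Those calculations combine the method of steps with the explicit fast-subsystem formula \eqref{eq:solFast} to produce the jump identity
\begin{displaymath}
\mos{\fstate}{2}(0^+)-\mos{\fstate}{1}(\delay^-) = \sum_{k=1}^{\ind(\E,\A)-1}\N^{k}\Ba\WCFR\begin{bmatrix}\derv{\shistory}{k}(0)-\derv{\mos{\sstate}{1}}{k}(0^+)\\\derv{\fhistory}{k}(0)-\derv{\mos{\fstate}{1}}{k}(0^+)\end{bmatrix},
\end{displaymath}
the $k=0$ contribution being annihilated by admissibility. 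From here the three cases split according to whether $\N\Ba$ vanishes.

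For the \advanced{} case, assume $\N^{k}\Ba\neq0$ for some $k\in\mathbb{N}$ and set $k^{*}\vcentcolon=\max\{k\in\{1,\ldots,\ind(\E,\A)-1\}\mid\N^{k}\Ba\neq0\}$. Since $\WCFR$ is invertible, the matrix $\N^{k^{*}}\Ba\WCFR$ is nonzero, so its block decomposition $[M_{1},M_{2}]$ into an $\ndif$-column and an $\nalg$-column block satisfies $M_{1}\neq0$ or $M_{2}\neq0$. In the first subcase I would pick $\tilde{\sstate}\in\field^{\ndif}$ with $M_{1}\tilde{\sstate}\neq0$ and invoke the first half of \Cref{prop:smoothSol2} with $m=k^{*}$; in the second subcase I would use the second half with $\tilde{\fstate}\in\field^{\nalg}$ satisfying $M_{2}\tilde{\fstate}\neq0$. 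Either way one obtains an admissible analytic $\history$ under which every summand with $k<k^{*}$ vanishes by \eqref{eq:conditionsSmoothPsi} or \eqref{eq:conditionsSmoothEta}, every summand with $k>k^{*}$ vanishes by maximality of $k^{*}$, and the $k^{*}$-th summand is nonzero. Hence $\mos{\fstate}{2}(0^{+})\neq\mos{\fstate}{1}(\delay^{-})$, so $\mos{\state}{2}$ is discontinuous at $t=0$ and $\ell_{2}=-\infty$.

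If instead $\N\Ba=0$, then $\N^{k}\Ba=0$ for every $k\geq1$; the jump identity yields $\ell_{2}\geq0$ and \eqref{eq:fastSystemMethodSteps} collapses to the index-one-type recursion $\mos{\fstate}{i+1}=-\Bad\mos{\sstate}{i}-\Baa\mos{\fstate}{i}-\sum_{k=0}^{\ind(\E,\A)-1}\N^{k}\derv{\fmosf{i+1}}{k}$. Differentiating this once, substituting the slow subsystem \eqref{eq:slowSystem} for $\dmos{\sstate}{i}$, and iterating — exactly as in the proof of \Cref{thm:index1} — yields
\begin{displaymath}
\dmos{\fstate}{i+1}(0^{+})-\dmos{\fstate}{i}(\delay^{-})=(-1)^{i}\Baa^{i}\bigl(\dmos{\fstate}{1}(0^{+})-\dot{\fhistory}(0^{-})\bigr),\qquad i=1,\ldots,\maxMOS-1.
\end{displaymath}
A second application of \Cref{prop:smoothSol2} with $m=1$ shows the right-hand prefactor can be made to equal any prescribed vector, so $\ell_{i+1}\geq1$ iff $\Baa^{i}=0$. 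Thus the DDAE is of \retarded{} precisely when $\Baa$ is nilpotent with index $\indEA_{\B}<\maxMOS$; otherwise $\ell_{i}=0$ for every $i\in\mosIndexSet$ and the DDAE is of \neutral{}.

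The main obstacle I anticipate is the bookkeeping in the application of \Cref{prop:smoothSol2}: admissibility couples $\derv{\history}{k}(-\delay)$ to $\history(0)$ via \eqref{eq:admissibleHistory}, and it must be verified that prescribing a single high-order derivative jump while zeroing all lower ones does not violate this coupling. Fortunately the Hermite-interpolation construction inside the proof of \Cref{prop:smoothSol2} — choose all delayed derivatives first and propagate forward via \eqref{eq:smoothnessConditions} — is engineered precisely to leave the $m$-th slow (resp.\ fast) derivative at $0$ as an independent free parameter, so no extra argument beyond a direct invocation is required.
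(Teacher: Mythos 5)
Your proposal follows the paper's own argument essentially verbatim: the same jump identity for $\mos{\fstate}{2}(0^+)-\mos{\fstate}{1}(\delay^-)$ with the $k=0$ term removed by admissibility, the same invocation of \Cref{prop:smoothSol2} to realize a nonzero jump in the \advanced{} case, and the same collapse to the index-one recursion and \Cref{thm:index1} when $\N\Ba=0$. The only difference is that you spell out the choice of $k^*$ and the block decomposition of $\N^{k^*}\Ba\WCFR$, which the paper leaves implicit in its one-line appeal to \Cref{prop:smoothSol2}.
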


\begin{example}
	Introducing the shifted variable $\z(t) = \state(t-\delay)$ shows that the DDAE associated with
	\begin{equation}
		\label{eq:advancedSystem}
		\state(t) = \B\state(t-\delay) + \D\dot{\state}(t-\delay) + \f(t)
	\end{equation}
	is of \advanced{} if and only if $\D\neq0$.
\end{example}

\begin{remark}
	\label{rem:smoothing}
	Checking the proof of \Cref{cor:hiddenDelays}, we immediately infer from \Cref{thm:classificationDDAE} that \Cref{cor:hiddenDelays} is also true for arbitrary index $\ind(\E,\A)$. As a consequence, if the DDAE \eqref{eq:DDAE} is of smoothing type, then there exists a sequence $j_k\in\mathbb{N}$ such that $\ell_{j_k} = k$ and hence the solution becomes arbitrary smooth over time, which justifies the name \retarded{}.
\end{remark}

A common approach to analyze the (exponential) stability of the DDAE \eqref{eq:DDAE} is to compute the spectral abscissa, which is defined as
\begin{displaymath}
		\alpha(\E,\A,\D) = \sup \{\mathrm{Re}(\lambda) \mid \det(\lambda\E-\A-\exp(-\lambda\delay)\D) = 0\}.
\end{displaymath}
Surprisingly, the condition $\alpha(\E,\A,\D)<0$ is not sufficient for a DDAE to be exponentially stable \cite{DuLMT13}. However, based on the new classification we have the following result.

\begin{corollary}
	\label{cor:stability}
	Suppose that the DDAE \eqref{eq:DDAE} is not of \advanced{}. Then the DDAE \eqref{eq:DDAE} is exponentially stable if and only if $\alpha(\E,\A,\D)<0$.
\end{corollary}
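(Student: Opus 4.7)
The plan is to reduce the DDAE to an equivalent delay equation of retarded or neutral type whose characteristic function agrees, up to a nonvanishing factor, with $\det(\lambda\E - \A - e^{-\lambda\delay}\D)$, and then to invoke the classical spectral-abscissa characterization of exponential stability for such reduced systems.

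First, by \Cref{thm:classificationDDAE} the assumption that \eqref{eq:DDAE} is not of \advanced{} is equivalent to $\N\Ba = 0$. I would use this together with \Cref{cor:hiddenDelays}, in the general-index form announced in \Cref{rem:smoothing}, to rewrite the slow subsystem in quasi-Weierstra{\ss} coordinates as a DDE with commensurate delays,
\[
  \dot{\z}(t) = \J \z(t) + \sum_{k=0}^{\indEA_{\B}} \Dk{k}\, \z(t-(k+1)\delay) + \vartheta(t),
\]
and to close the fast component via the algebraic identity \eqref{eq:fastSystemMethodSteps}. In the \retarded{} case ($\Baa$ nilpotent) the resulting system is purely retarded; in the \neutral{} case a difference equation for the fast variable is coupled to the DDE for the slow variable. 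A direct computation with \eqref{eq:matricesWCF} then shows that the characteristic function of the reduced system factors, up to a nonzero polynomial factor coming from $\det(\WCFL)\det(\WCFR)$, as $\det(\lambda\E - \A - e^{-\lambda\delay}\D)$; consequently $\alpha(\E,\A,\D)$ coincides with the spectral abscissa of the reduced system.

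Second, I would invoke the classical results: for retarded DDEs with commensurate delays, negativity of the spectral abscissa is equivalent to exponential stability, and the same equivalence holds for neutral DDEs provided the associated difference operator is formally (strongly) stable. The main obstacle will be verifying this latter condition in the \neutral{} case. The key point is that the failure mode documented in \cite{DuLMT13}, where $\alpha(\E,\A,\D) < 0$ is compatible with non-exponential-stability, is caused by chains of characteristic roots that accumulate along a vertical asymptote arbitrarily far to the right in the complex plane. In the time domain this pathology is precisely the de-smoothing phenomenon of \Cref{ex:advanced}, and it is ruled out by the assumption $\N\Ba = 0$. Hence, once the \advanced{} case is excluded, the roots of $\det(\lambda\E - \A - e^{-\lambda\delay}\D) = 0$ already capture all of the stability-relevant spectrum of the reduced system, and the standard spectral-abscissa criterion gives the claimed equivalence.
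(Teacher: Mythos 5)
Your route is genuinely different from the paper's: the paper's entire proof consists of observing that ``not of \advanced{}'' is equivalent to $\N\Ba=0$ (via \Cref{thm:classificationDDAE}) and then citing Proposition~3.4 and Theorem~3.4 of \cite{DuLMT13}, which already contain the equivalence for exactly this class of DDAEs. You instead attempt to reprove the equivalence from scratch by eliminating the fast variable and appealing to classical retarded/neutral DDE theory. That plan is legitimate in principle, but as written it has a genuine gap in the \neutral{} case.

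The gap is this: your claim that the characteristic function of the reduced system agrees with $\det(\lambda\E-\A-\mathrm{e}^{-\lambda\delay}\D)$ ``up to a nonzero polynomial factor coming from $\det(\WCFL)\det(\WCFR)$'' is false unless $\Baa$ is nilpotent. Eliminating the fast block contributes the factor $\det(I_{\nalg}+\mathrm{e}^{-\lambda\delay}\Baa)$ (in the index-one case; analogously with $\N$ present), whose zeros lie on, and whose associated characteristic-root chains accumulate along, the vertical lines $\mathrm{Re}\,\lambda=\ln|\mu|/\delay$ for the nonzero eigenvalues $\mu$ of $\Baa$. This is the essential spectrum of the underlying difference operator and cannot be discarded: the spectral abscissa of the slow subsystem alone is in general strictly smaller than $\alpha(\E,\A,\D)$. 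Relatedly, \Cref{cor:hiddenDelays} (and its extension in \Cref{rem:smoothing}) is only available when $\Baa$ is nilpotent; otherwise the Neumann series $\sum_k(-\Baa\shift{\delay})^k$ does not terminate, so on an unbounded time horizon you do not obtain a retarded DDE with finitely many commensurate delays but a genuinely coupled DDE/difference system. For such a system the entire content of the corollary is the step you defer: one must show that $\alpha(\E,\A,\D)<0$ already forces exponential stability of the difference part, i.e.\ $\rho(\Baa)<1$, precisely because the root chains accumulate along the lines above. Your justification --- that the counterexamples of \cite{DuLMT13} are ``precisely the de-smoothing phenomenon'' and hence excluded by $\N\Ba=0$ --- is a plausible heuristic, not an argument; it is exactly the content of the two results the paper cites. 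Finally, the direction ``exponentially stable $\Rightarrow\alpha(\E,\A,\D)<0$'' is never addressed; it is the easy direction (test with exponential solutions), but it should at least be stated.
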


\begin{proof}
	Since the DDAE \eqref{eq:DDAE} is not of \advanced{}, we have $\N\Ba = 0$. The result follows directly from \cite[Proposition~3.4 and Theorem~3.4]{DuLMT13}.
\end{proof}

Note that we refrain from using the terminology retarded, neutral, and advanced in \Cref{def:classification}, although these terms are widely used in the delay literature \cite{BelC63,BelZ03,HaM15,HalL93}. The reason for this is, that in the classical definition in \cite{BelC63}, a retarded DDE becomes advanced if it is solved backwards in time, an advanced equation becomes retarded and a neutral equation stays neutral. For the classification introduced in \Cref{def:classification} this is however not true. To see this, we introduce the new variable $\xi(t-\tau) = \state(-t)$ such that \eqref{eq:DDAE} transforms to
\begin{equation*}
	\label{eq:backwardSystem1}
	\E\dot{\xi}(t-\delay) = -\B\xi(t) - \A\xi(t-\delay) - f(-t).
\end{equation*}
This leads to the following definition.

\begin{defn}
	Consider the DDAE \eqref{eq:DDAE}. Define
	\begin{displaymath}
		\bE \vcentcolon= \begin{bmatrix}
			0 & \E\\
			0 & 0
		\end{bmatrix}\in\field^{2\stateDim,2\stateDim},\qquad
		\bA \vcentcolon= \begin{bmatrix}
			-\B & 0\\
			0 & I_{\stateDim}
		\end{bmatrix}\in\field^{2\stateDim,2\stateDim},\qquad
		\bB \vcentcolon= \begin{bmatrix}
			-\A & 0\\
			-I_{\stateDim} & 0
		\end{bmatrix}\in\field^{2\stateDim,2\stateDim}.
	\end{displaymath}
	Then we call the DDAE
	\begin{equation}
		\label{eq:backwardSystem}
		\bE\dot{\bstate}(t) = \bA\bstate(t) + \bB\bstate(t-\delay) + \bff(t)
	\end{equation}
	with $\bff:\timeInt\to\field^{2\stateDim}$ the \emph{backward system} for the DDAE \eqref{eq:DDAE}.
\end{defn}

Note that the backward system satisfies \Cref{ass:regular} if and only if $\det(\B)\neq 0$. In this case, we can transform the backward system \eqref{eq:backwardSystem} to quasi-Weierstra{\ss} form via the matrices
\begin{displaymath}
	\WCFL = \begin{bmatrix}
		-\B^{-1} & 0\\
		0 & I_{\stateDim}
	\end{bmatrix} \qquad\text{and}\qquad\WCFR = I_{2\stateDim}.
\end{displaymath}
In particular, we have
\begin{displaymath}
	(\WCFL\bE\WCFR)(\WCFL\bB\WCFR) = \begin{bmatrix}
		0 & -\B^{-1}\E\\
		0 & 0
	\end{bmatrix}\begin{bmatrix}
		\B^{-1}\A & 0\\
		-I_{\stateDim} & 0
	\end{bmatrix} = \begin{bmatrix}
		-\B^{-1}\E & 0\\
		0 & 0
	\end{bmatrix}.
\end{displaymath}
Thus \Cref{thm:classificationDDAE} implies that $\E=0$ is a necessary condition for the backward system \eqref{eq:backwardSystem} to be of \retarded{} or \neutral{}, which implies that the DDAE \eqref{eq:DDAE} cannot be of \advanced{}. 
\begin{example}
	\label{ex:backwardSystem}
	Consider the DDAE given by $\field = \mathbb{R}$, $\stateDim =2$, $\f\equiv 0$, $\delay=1$, and
	\begin{displaymath}
		\E = \begin{bmatrix}
			0 & 1\\
			0 & 0
		\end{bmatrix},\qquad \A = \begin{bmatrix}
			1 & 0\\
			0 & 1
		\end{bmatrix},\qquad \B = \begin{bmatrix}
			1 & 1\\
			0 & 1
		\end{bmatrix}.
	\end{displaymath}
	Since $(\E,\A)$ is already in Weierstra\ss{} form and $\E\B\neq 0$, \Cref{thm:classificationDDAE} implies that the DDAE is of \advanced{}. Since $\E\neq 0$ also the backward system is of \advanced{}.
\end{example}

Let us mention that if $\det(B)=0$, then the method of steps \eqref{eq:sequenceDAEs} cannot be used to determine the solution of the backward system. Instead, one may use the shift-index concept defined in \cite{HaMS14,HaM15} to make the pencil $(\bE,\bA)$ regular.

\section{Impact of splicing conditions}
In the previous section we have established algebraic criteria to check whether a discontinuity in the derivative of $\dot{\state}$ at $t=0$ is smoothed out, is propagated to $t=\delay$ or is amplified in the sense that $\state$ becomes discontinuous at $t=\delay$. While the definition of \neutral{} is valid for all integer multiples of the delay time, the definitions of \retarded{} and \advanced{} are based on single time points and hence the question whether the (de-)smoothing continues is imminent. For DDAEs of \retarded{}, this can be answered positively (see \Cref{rem:smoothing}). For DDAEs of \advanced{} the question can be rephrased as follows: If we restrict the set of admissible history functions such that the \emph{splicing condition} (cf. \cite{BelZ03})
\begin{equation}
	\label{eq:splicingCondition}
	\derv{\history}{k}(0) = \derv{\state}{k}(0^+)\qquad\text{for}\ k=0,\ldots,\kappa
\end{equation}
is satisfied for some $\kappa\in\mathbb{N}$, is there an integer $j\in\mathbb{N}$ such that the inital condition
\begin{displaymath}
	\mos{\state}{j}(0) = \mos{\state}{j-1}(\delay)
\end{displaymath}
is not consistent for the DAE \eqref{eq:sequenceDAEs}. For DDAEs of \neutral{}, we expect that the smoothness at integer multiples of the delay time stays invariant. This is indeed the case as the following result shows.

\begin{lemma}
	\label{lem:moreRegularityNeutral}
	Suppose that the DDAE \eqref{eq:DDAE} is of \neutral{} and the history function $\history\in\mathcal{C}^{\infty}(\timeInt,\field^{\stateDim})$ satisfies the splicing condition \eqref{eq:splicingCondition}. Then
	\begin{displaymath}
		\derv{\mos{\state}{i}}{k}(0^+) = \derv{\mos{\state}{i-1}}{k}(\delay^-)\qquad\text{for all}\ i\in\mathbb{N},\ k=0,\ldots,\kappa.
	\end{displaymath}
\end{lemma}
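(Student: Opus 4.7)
The plan is induction on $i\geq 1$. The base case $i=1$ is the splicing condition \eqref{eq:splicingCondition} itself, since $\mos{\state}{0}(t)=\history(t-\delay)$ gives $\derv{\mos{\state}{0}}{k}(\delay^-)=\derv{\history}{k}(0)$ by smoothness of $\history$, and this equals $\derv{\state}{k}(0^+)=\derv{\mos{\state}{1}}{k}(0^+)$ by \eqref{eq:splicingCondition}. For the inductive step I would work in the quasi-Weierstra{\ss} coordinates \eqref{eq:WCF}, splitting $\mos{\state}{i}$ into its slow and fast parts $\mos{\sstate}{i}$, $\mos{\fstate}{i}$; since $\WCFR\in\GL{\stateDim}{\field}$, matching all $k$-th derivatives of $\mos{\state}{i+1}$ and $\mos{\state}{i}$ at the boundary is equivalent to matching them separately for the slow and fast components.

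For the slow part, differentiating the slow subsystem version of \eqref{eq:slowSystem} on intervals $i+1$ and $i$ a total of $j$ times, evaluating at $0^+$ respectively $\delay^-$, and subtracting (smoothness of $\ssf$ cancels the inhomogeneity contribution) yields
\[
\derv{\mos{\sstate}{i+1}}{j+1}(0^+) - \derv{\mos{\sstate}{i}}{j+1}(\delay^-) = \J\bigl(\derv{\mos{\sstate}{i+1}}{j}(0^+) - \derv{\mos{\sstate}{i}}{j}(\delay^-)\bigr) + \Bdd\bigl(\derv{\mos{\sstate}{i}}{j}(0^+) - \derv{\mos{\sstate}{i-1}}{j}(\delay^-)\bigr) + \Bda\bigl(\derv{\mos{\fstate}{i}}{j}(0^+) - \derv{\mos{\fstate}{i-1}}{j}(\delay^-)\bigr).
\]
Starting from $\mos{\sstate}{i+1}(0) = \mos{\sstate}{i}(\delay)$ at $j=0$, a secondary induction on $j=0,\dots,\kappa$, combined with the inductive hypothesis on $i$ annihilating the last two terms on the right, shows that all slow jumps up to order $\kappa$ vanish.

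For the fast part, the crucial ingredient is \Cref{thm:classificationDDAE}: the neutral-type assumption forces $\N\Ba=0$, hence $\N\Bad=\N\Baa=0$, so the terms with $k\geq 1$ in the solution formula \eqref{eq:fastSystemMethodSteps} annihilate the coefficients of $\mos{\sstate}{i}$ and $\mos{\fstate}{i}$, leaving
\[
\mos{\fstate}{i+1}(t) = -\Bad\mos{\sstate}{i}(t) - \Baa\mos{\fstate}{i}(t) - \sum_{k=0}^{\indEA-1}\N^k\derv{\fmosf{i+1}}{k}(t).
\]
Differentiating $j$ times, evaluating at $0^+$ for $\mos{\fstate}{i+1}$ and at $\delay^-$ for $\mos{\fstate}{i}$, and using smoothness of $\ff$, subtraction gives
\[
\derv{\mos{\fstate}{i+1}}{j}(0^+) - \derv{\mos{\fstate}{i}}{j}(\delay^-) = -\Bad\bigl(\derv{\mos{\sstate}{i}}{j}(0^+) - \derv{\mos{\sstate}{i-1}}{j}(\delay^-)\bigr) - \Baa\bigl(\derv{\mos{\fstate}{i}}{j}(0^+) - \derv{\mos{\fstate}{i-1}}{j}(\delay^-)\bigr),
\]
whose right-hand side vanishes for $j=0,\dots,\kappa$ by the inductive hypothesis. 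Transforming back via $\WCFR$ closes the inductive step.

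The main obstacle, or rather the point where the lemma's hypothesis gets used, is in the fast component: without $\N\Ba=0$, the formula \eqref{eq:fastSystemMethodSteps} would involve derivatives of orders $j,j+1,\dots,j+\indEA-1$ of the previous interval's components, so matching derivatives only up to order $\kappa$ in interval $i$ would not suffice to control jumps up to order $\kappa$ in interval $i+1$. It is precisely the neutral-type condition $\N\Ba=0$ that aligns the orders of differentiation between consecutive intervals and thereby allows the induction to close.
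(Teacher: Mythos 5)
Your proposal is correct and follows essentially the same route as the paper: decompose into slow and fast components via the quasi-Weierstra{\ss} form, use the neutral-type condition $\N\Ba=0$ (hence $\N\Bad=\N\Baa=0$) to collapse the fast solution formula to its zeroth-order term, and propagate the matching of derivatives across interval boundaries. The only difference is presentational — the paper verifies the transition from the first to the second interval and asserts that this suffices, whereas you spell out the full induction on $i$ (with the secondary induction on the derivative order $j$ for the slow part), which is a slightly more careful rendering of the same argument.
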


\begin{proof}
	Since \eqref{eq:DDAE} is of \neutral{}, we have $\N\Ba=0$ in \eqref{eq:WCF} according to \Cref{thm:classificationDDAE}. It suffices to show that
	\begin{displaymath}
		\derv{\mos{\state}{2}}{j}(0^+) = \derv{\mos{\state}{1}}{j}(\delay^-)\qquad\text{for all}\ j=0,\ldots,\kappa.
	\end{displaymath}
	Since $\history$ is admissible and the DDAE is of \neutral{}, equation \eqref{eq:slowSystem} implies that
	\begin{align*}
		\dmos{\sstate}{2}(0^+)-\dmos{\sstate}{1}(\delay^-) = \J\left(\mos{\sstate}{2}(0)-\mos{\sstate}{1}(\delay)\right) + \Bd\left(\mos{\state}{1}(0) - \history(0)\right) = 0.
	\end{align*}
	Iteratively, we obtain 
	\begin{align*}
		\derv{\mos{\sstate}{2}}{k+1}(0^+) - \derv{\mos{\sstate}{1}}{k+1}(\delay^-) = \J\left(\derv{\mos{\sstate}{2}}{k}(0^+)-\derv{\mos{\sstate}{1}}{k}(\delay^-)\right) + \Bd\left(\derv{\mos{\state}{1}}{k}(0^+) - \derv{\history}{k}(0)\right) = 0
	\end{align*}
	for $k=2,\ldots,\kappa$. For the fast system \eqref{eq:fastSystem} we infer directly
	\begin{displaymath}
		\derv{\mos{\fstate}{2}}{k}(0^+)-\derv{\mos{\fstate}{1}}{k}(\delay^-) = \Ba\left(\derv{\history}{k}(0)-\derv{\mos{\state}{1}}{k}(0^+)\right) = 0
	\end{displaymath}
	for $k=0,1,\ldots,\kappa$, which completes the proof.
\end{proof}

Note that \Cref{lem:moreRegularityNeutral} guarantees that the solution of the DDAE is at least as smooth as the initial transition from the history function to the solution. Conversely, assume that the Jordan canonical form of $\Baa$ exists and let $\tilde{\fstate}\in\field^{\nalg}\setminus\{0\}$ be an eigenvector of $\Baa$ for the eigenvalue $\lambda\neq 0$. Then \Cref{prop:smoothSol2} implies (with $m=\kappa+1$) the existence of an history function $\history$ such that the solution of the IVP \eqref{eq:IVP} satisfies
\begin{align*}
	\derv{\mos{\fstate}{2}}{\kappa+1}(0^+)-\derv{\mos{\fstate}{1}}{\kappa+1}(\delay^-) &= \Baa\left(\derv{\fhistory}{\kappa+1}(0)-\derv{\mos{\fstate}{1}}{\kappa+1}(0^+)\right)\\
		&=\lambda\tilde{\fstate}\neq 0.
\end{align*}
Thus, in general we cannot expect the solution of a DDAE of \neutral{} to get any smoother, which again justifies the terminology. For DDAEs of \advanced{}, \Cref{ex:advanced} might suggest that the solution becomes less and less smooth until it becomes discontinuous. This is however not necessarily the case as the following example demonstrates.

\begin{example}
	\label{ex:weekleDeSmoothing}
	Suppose that the DDAE \eqref{eq:DDAE} satisfies \Cref{ass:regular,ass:smoothness} and additionally satisfies $\N\Baa=0$, $\N\Ba\neq0$, and $\N^2\Ba=0$, i.\,e., the DDAE is of \advanced{} according to \Cref{thm:classificationDDAE}. Suppose that the history function $\history$ satisfies \eqref{eq:smoothnessCondition}. Then
	\begin{align*}
		\mos{\fstate}{2}(0) - \mos{\fstate}{1}(0) &= \sum_{k=0}^{\ind(\E,\A)-1}\N^k\Ba\left(\derv{\history}{k}(0)-\derv{\mos{\state}{1}}{k}(0^+)\right)\\
		&= \sum_{k=0}^1 \N^k\Ba\left(\derv{\history}{k}(0)-\derv{\mos{\state}{1}}{k}(0^+)\right) = 0.
	\end{align*}
	However, we have $\dmos{\sstate}{2}(0^+) - \dmos{\fstate}{1}{\delay^-} = 0$ by the definition of the slow system \eqref{eq:slowSystem} and by induction we infer
	\begin{displaymath}
		\mos{\fstate}{i+1}(0^+) - \mos{\fstate}{i}(\delay^-) = \N\Bad\left(\dmos{\sstate}{i-1}(\delay^-)-\dmos{\sstate}{i}(0^+)\right) = 0.
	\end{displaymath}
	Thus the initial condition $\mos{\state}{i}(0) = \mos{\state}{i-1}(\delay)$ is consistent for \eqref{eq:sequenceDAEs} and hence the solution exists for all $\tf>0$.
\end{example}

For a general analysis let us assume that the DDAE \eqref{eq:DDAE} satisfies \Cref{ass:regular,ass:smoothness} and is of \advanced{} and the history function $\history$ satisfies the splicing condition \eqref{eq:splicingCondition}. From \eqref{eq:slowSystem} we infer inductively
\begin{displaymath}
	\derv{\mos{\sstate}{2}}{k}(0^+) = \J\derv{\mos{\sstate}{2}}{k-1}(0^+) + \Bd\derv{\mos{\state}{1}}{k-1}(0^+) + \derv{\smosf{2}}{k}(0) = \derv{\mos{\sstate}{1}}{k}(\delay^-)
\end{displaymath}
for $k=1,\ldots,\kappa+1$. For the fast subsystem, the splicing condition \eqref{eq:splicingCondition} implies
\begin{align*}
	\mos{\fstate}{2}(0^+)-\mos{\fstate}{1}(\delay^-) = \sum_{k=\kappa+1}^{\ind(\E,\A)-1}\N^k\Ba \left(\derv{\history}{k}(0) - \derv{\mos{\state}{1}}{k}(0^+)\right)
\end{align*}
and hence a sufficient condition for the initial condition $\mos{\fstate}{2}(0)=\mos{\fstate}{1}(\delay)$ to be consistent is to assume $\N^k\Ba=0$ for $k\geq \kappa+1$. Note that this is immediately satisfied for $\ind(\E,\A)\leq \kappa+1$. To analyze the next interval we compute 
\begin{align*}
	\mos{\fstate}{3}(0^+) - \mos{\fstate}{2}(\delay^-) &= \sum_{k=1}^{\kappa}\N^k\Ba\WCFR\begin{bmatrix}
		\derv{\mos{\sstate}{1}}{k}(\delay^-) - \derv{\mos{\sstate}{2}}{k}(0^+)\\
		\derv{\mos{\fstate}{1}}{k}(\delay^-)-\derv{\mos{\fstate}{2}}{k}(0^+)
	\end{bmatrix}\\
	&= \sum_{k=1}^{\kappa} \N^k\Baa\left(\derv{\mos{\fstate}{1}}{k}(\delay^-)-\derv{\mos{\fstate}{2}}{k}(0^+)\right).
\end{align*}
Thus, the assumption $\N\Baa=0$ implies $\mos{\fstate}{3}(0^+) - \mos{\fstate}{2}(\delay^-) = 0$. Unfortunately, we have
\begin{align*}
	\derv{\mos{\sstate}{3}}{2}(0^+) - \derv{\mos{\sstate}{2}}{2}(\delay^-) &= \Bda\left(\dmos{\fstate}{2}(0^+) - \dmos{\fstate}{1}(\delay^-)\right),
\end{align*}
and thus cannot show that the initial condition $\mos{\fstate}{4}(0) = \mos{\fstate}{3}(\delay)$ is consistent without posing further assumptions on the matrices $\E,\A$, and $\B$. Since this becomes quite technical, we summarize our findings only for the case $\ind(\E,\A)\leq 3$.

\begin{theorem}
	\label{thm:index3} Suppose the IVP \eqref{eq:IVP} satisfies \Cref{ass:regular,ass:smoothness} and $\ind(\E,\A)\leq 3$. Moreover, assume $\N\Baa = 0$ and $\N^2\Bad\Bda = 0$. Then for every admissible history function $\history$ that satisfies \eqref{eq:smoothnessCondition} and 
	\begin{align}
		\label{eq:secondSplicingCondition}
		\ddot{\history}(0) &= \Adiff\dot{\history}(0) + \sum_{k=0}^{\ind\E,\A}\left(\Bk{k}\derv{\history}{k+1}(-\delay) + \C{k}\derv{\f}{k+1}(0)\right)
	\end{align}
	the IVP \eqref{eq:IVP} has a unique solution.
\end{theorem}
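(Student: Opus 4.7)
The plan is to apply the method of steps and to prove by induction on $j\in\{0,1,\ldots,\maxMOS-1\}$ that the initial condition $\mos{\state}{j+1}(0)=\mos{\state}{j}(\delay)$ is consistent for the DAE in \eqref{eq:sequenceDAEs}; once this is established, \Cref{cor:consistencyCondition} produces a unique $\mathcal{C}^\infty$ solution $\mos{\state}{j+1}$ on each subinterval $[0,\delay]$, and the pieces glue into the unique solution of \eqref{eq:IVP}. Working in the quasi-Weierstra\ss\ coordinates of \eqref{eq:WCF}, the slow component satisfies $\mos{\sstate}{j+1}(0)=\mos{\sstate}{j}(\delay)$ by construction, so the sole obstruction is the algebraic identity $\mos{\fstate}{j+1}(0^+)=\mos{\fstate}{j}(\delay^-)$ dictated by the solution formula \eqref{eq:fastSystemMethodSteps}.

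Introduce the jump notation $[y]_j \vcentcolon= y(j\delay^+) - y(j\delay^-)$, applied componentwise to $\sstate$, $\fstate$, and their derivatives. Subtracting \eqref{eq:slowSystem} at $t = j\delay^\pm$ and then differentiating $m$ times yields the slow recursion
\begin{equation*}
[\derv{\sstate}{m+1}]_j = \J[\derv{\sstate}{m}]_j + \Bdd[\derv{\sstate}{m}]_{j-1} + \Bda[\derv{\fstate}{m}]_{j-1},
\end{equation*}
and subtracting \eqref{eq:fastSystemMethodSteps} at the two time points, while invoking $\ind(\E,\A)\leq 3$ to truncate the sum, gives the fast recursion
\begin{equation*}
[\derv{\fstate}{m}]_j = -\sum_{k=0}^{2}\N^k\bigl(\Bad[\derv{\sstate}{k+m}]_{j-1} + \Baa[\derv{\fstate}{k+m}]_{j-1}\bigr).
\end{equation*}
At $j = 0$, admissibility of $\history$ gives $[\sstate]_0 = [\fstate]_0 = 0$; \Cref{lem:smoothSol} identifies \eqref{eq:smoothnessCondition} with $[\dot{\sstate}]_0 = [\dot{\fstate}]_0 = 0$; and \eqref{eq:secondSplicingCondition} is exactly the image of \eqref{eq:smoothnessCondition} under one differentiation of the underlying DDE \eqref{eq:underlyingDDE}, and so analogously yields $[\derv{\sstate}{2}]_0 = [\derv{\fstate}{2}]_0 = 0$.

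For $j = 1$, the fast recursion reduces to $[\fstate]_1 = -\sum_{k=0}^{2}\N^k(\Bad[\derv{\sstate}{k}]_0 + \Baa[\derv{\fstate}{k}]_0) = 0$. For $j = 2$, the slow recursion propagates the base data to $[\derv{\sstate}{k}]_1 = 0$ for $k = 0, 1, 2$, after which $[\fstate]_2 = -\N\Baa[\dot{\fstate}]_1 - \N^2\Baa[\derv{\fstate}{2}]_1 = 0$ by $\N\Baa = 0$ (which also forces $\N^2\Baa = 0$). The delicate inductive step is as follows: assume $[\sstate]_l = [\fstate]_l = 0$ for all $l\leq j$ with $j\geq 2$. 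Two applications of the slow recursion give $[\dot{\sstate}]_j = 0$ and the pivotal identity
\begin{equation*}
[\derv{\sstate}{2}]_j = \Bda[\dot{\fstate}]_{j-1},
\end{equation*}
whose right-hand side is generically nonzero. Plugging everything into the fast recursion at index $j+1$,
\begin{equation*}
[\fstate]_{j+1} = -\N\Baa[\dot{\fstate}]_j - \N^2\bigl(\Bad[\derv{\sstate}{2}]_j + \Baa[\derv{\fstate}{2}]_j\bigr) = -\N^2\Bad\Bda[\dot{\fstate}]_{j-1},
\end{equation*}
since all other contributions vanish by $\N\Baa = 0$. The hypothesis $\N^2\Bad\Bda = 0$ annihilates the remaining term, yielding $[\fstate]_{j+1} = 0$ and closing the induction.

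The principal obstacle is pinpointing exactly which chain of couplings can transport a fast-component discontinuity across two successive steps: the $\N^2$ factor in the fast formula reaches back to a second-order slow jump at the preceding step, which the slow recursion, via the fast-to-slow coupling $\Bda$, in turn reduces to a first-order fast jump two steps earlier. The two splicing conditions annihilate all jumps of order at most two at the initial transition, and the two algebraic hypotheses close exactly the two residual propagation channels, namely the $\N^k\Baa$ chain (killed by $\N\Baa = 0$) and the $\N^2\Bad\Bda$ chain (killed by the second hypothesis).
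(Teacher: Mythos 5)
Your proof is correct and follows essentially the same route as the paper: method of steps in quasi-Weierstraß coordinates, reduction of consistency at each multiple of $\delay$ to the vanishing of the fast-component jump, and closure of the two propagation channels via $\N\Baa=0$ and $\N^2\Bad\Bda=0$ (your pivotal identity $[\derv{\sstate}{2}]_j = \Bda[\dot{\fstate}]_{j-1}$ is exactly the relation the paper isolates in its discussion preceding the theorem). Your version merely makes the induction and the jump recursions fully explicit where the paper compresses the first steps into ``the previous discussion.''
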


\begin{proof}
	The assumptions on $\history$ imply
	\begin{displaymath}
		\derv{\history}{k}(0) = \derv{\state}{k}(0^+)\qquad\text{for}\ k=0,1,2.
	\end{displaymath}
	Since $\ind(\E,\A)\leq 3$, we have $\N^3 = 0$. Together with $\N\Baa=0$ the previous discussion guarantees that a solution exists on the interval $[-\delay,3\delay]$. Using $\N\Baa=0$, we observe
	\begin{align*}
		\mos{\fstate}{i+1}(0^+) - \mos{\fstate}{i}(\delay^-) &= \sum_{k=0}^2 \N^k\Bad\left(\derv{\mos{\sstate}{i-1}}{k}(\delay^-) - \derv{\mos{\sstate}{i}}{k}(0^+)\right)\\
		&= \N^2\Bad\Bda\left(\dmos{\fstate}{i-2}(\delay^-) - \dmos{\fstate}{i-1}(0^+)\right) = 0
	\end{align*}
	and thus the initial condition $\mos{\state}{i+1}(0) = \mos{\state}{i}(\tau)$ is consistent for all $i\in\mathbb{N}$. The result follows from \Cref{cor:consistencyCondition}.
\end{proof}

\begin{remark}
	The proof of \Cref{thm:index3} shows that the result can be further improved by requiring different splicing conditions for the history function $\shistory$ for the slow state $\sstate$ and for the history function $\fhistory$ of the fast state $\fstate$. 
\end{remark}

\section{Comparison to the existing classification}

In \cite{HaM15} the authors replace the delayed argument in the DDAE \eqref{eq:DDAE} with a function parameter $\lambda:\timeInt\to\field^{\stateDim}$ and obtain the initial value problem
\begin{equation}
	\label{eq:functionParameterDAE}
	\begin{aligned}
		\E\dot{\state}(t) &= \A\state(t) + \B\lambda(t) + \f(t),\\
		\state(t) &= \history(0),
	\end{aligned}
\end{equation}
on the time interval $\timeInt$. They call the function parameter $\lambda$ \emph{consistent} if there exists a consistent initial condition $\history(0)$ for the IVP \eqref{eq:functionParameterDAE}. Based on the function parameter $\lambda$ the following classification for DDAEs \cite{HaM15} is introduced.

\begin{defn}
	\label{def:HaClassification}
	The DDAE \eqref{eq:DDAE} is called \emph{retarded}, \emph{neutral}, or \emph{advanced}, if the minimum smoothness requirement for a consistent function parameter $\lambda$ is that $\lambda\in\mathcal{C}^{1}(\timeInt,\field^{\stateDim})$, $\lambda\in\mathcal{C}^1(\timeInt,\field^{\stateDim})$, or $\lambda\in\mathcal{C}^{k}(\timeInt,\field^{\stateDim})$ for some $k\geq 2$.
\end{defn}

To compare the classification based on propagation of primary discontinuities (cf. \Cref{def:classification}) with the classification of \cite{HaM15}, we need to understand \Cref{def:HaClassification} in terms of the quasi-Weierstra{\ss} form.

\begin{prop}
	Suppose that the DDAE \eqref{eq:DDAE} satisfies \Cref{ass:regular,ass:smoothness}. Then the DDAE \eqref{eq:DDAE} is  
	\begin{itemize}
		\item retarded if and only if $\Ba=0$, 
		\item neutral if and only if $\Ba\neq0$ and $\N\Ba=0$, and
		\item advanced otherwise,
	\end{itemize}
	where $\Ba$ and $\N$ are the matrices from the quasi-Weierstra{\ss} form (\Cref{thm:quasiWf}) and \eqref{eq:WCF}.
\end{prop}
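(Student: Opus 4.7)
The plan is to apply the quasi-Weierstra\ss{} decomposition from \Cref{thm:quasiWf} to the function parameter DAE \eqref{eq:functionParameterDAE} and read off the required smoothness of $\lambda$ from the fast subsystem. Multiplying \eqref{eq:functionParameterDAE} from the left by $\WCFL$ and writing $\WCFR^{-1}\state = \begin{bmatrix}\sstate^T & \fstate^T\end{bmatrix}^T$ decouples the DAE into the slow ODE $\dot{\sstate} = \J\sstate + \Bd\lambda + \ssf$ and the algebraic fast system $\N\dot{\fstate} = \fstate + \Ba\lambda + \ff$, using the notation of \eqref{eq:matricesWCF}. Since the slow block admits a $\mathcal{C}^1$ solution $\sstate$ for any continuous $\lambda$, every stronger regularity demand on $\lambda$ must originate in the fast block.

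Applying the closed-form solution \eqref{eq:solFast} to the fast subsystem yields
\[
\fstate = -\sum_{k=0}^{\ind(\E,\A)-1}\N^k\Ba\,\derv{\lambda}{k} - \sum_{k=0}^{\ind(\E,\A)-1}\N^k\derv{\ff}{k}.
\]
Because $\ff\in\mathcal{C}^{\infty}$ by \Cref{ass:smoothness} and a classical solution requires $\fstate\in\mathcal{C}^1$, the minimum smoothness required of $\lambda$ equals $\mathcal{C}^{m+1}$, where $m\vcentcolon=\max\{k\in\{0,\ldots,\ind(\E,\A)-1\}\mid\N^k\Ba\neq 0\}$ (with no such $m$ when $\Ba=0$). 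Matching this with \Cref{def:HaClassification}: $\Ba=0$ gives the retarded case; $\Ba\neq 0$ together with $\N\Ba=0$ forces $m=0$ and yields the neutral case; otherwise there exists $k\geq 1$ with $\N^k\Ba\neq 0$, so $\lambda$ must be at least $\mathcal{C}^2$, corresponding to the advanced case.

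The main obstacle is to verify that these bounds are sharp rather than merely sufficient, i.e.\ that the minimum smoothness is genuinely attained in each regime. For this I would exhibit, in the advanced case with largest nonzero index $m$, a $\lambda\in\mathcal{C}^m\setminus\mathcal{C}^{m+1}$ whose image under $\N^m\Ba$ lands in a direction where it cannot be compensated by the lower-order summands $\N^k\Ba\derv{\lambda}{k}$ with $k<m$, all of which are automatically $\mathcal{C}^1$ when $\lambda\in\mathcal{C}^m$. A Hermite interpolation argument analogous to \Cref{prop:smoothSol2}, together with the freedom to choose $\history(0)$ and $\f$ independently of $\lambda$, produces such witnesses and completes the three-way equivalence.
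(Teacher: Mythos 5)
Your argument is correct and follows essentially the same route as the paper: the paper reads the smoothness requirement for $\lambda$ off the coefficients $\Bk{k} = -\WCFR\left[\begin{smallmatrix}0\\ \N^{k-1}\Ba\end{smallmatrix}\right]$ of the underlying DDE \eqref{eq:underlyingDDE}, which is exactly the information your direct fast-subsystem formula $\fstate = -\sum_{k}\N^k\Ba\derv{\lambda}{k}-\sum_k\N^k\derv{\ff}{k}$ encodes, since \eqref{eq:underlyingDDE} is itself obtained by differentiating \eqref{eq:solFast}. Your extra attention to sharpness (constructing a witness $\lambda$ via Hermite interpolation) addresses a point the paper dismisses with ``can be directly seen'' but does not change the approach.
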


\begin{proof}
	The smoothness requirements for $\lambda$ can be directly seen from the underlying DDE \eqref{eq:underlyingDDE}. Note that we have
	\begin{align*}
		\Bk{0} &= \WCFR\begin{bmatrix}
			I_{\ndif} &0\\
			0&0
		\end{bmatrix}\WCFL\B = \WCFR\begin{bmatrix}
			\Bd\\0
		\end{bmatrix}\qquad\text{and}\\
		\Bk{k} &= -\WCFR\begin{bmatrix}
			0&0\\
			0&\N^{k-1}
		\end{bmatrix}\WCFL\B = -\WCFR\begin{bmatrix}
			0\\
			\N^{k-1}\Ba
		\end{bmatrix}
	\end{align*}
	for $k=1,\ldots,\ind(\E,\A)$. Hence \eqref{eq:DDAE} is retarded if and only if $\N^{k-1}\Ba=0$ for all $k=1,\ldots,\ind(\E,\A)$, which is equivalent to $\Ba=0$. The DDAE is neutral, if $\N^{k-1}\Ba=0$ for all $k=2,\ldots,\ind(\E,\A)$, which is equivalent to $\N\Ba=0$ and otherwise advanced.
\end{proof}

With the characterization we see immediately that the classification by \cite{HaM15} provides in the following sense an upper bound for the new definition.

\begin{corollary}
	\label{cor:classification}
	Suppose that the DDAE \eqref{eq:DDAE} satisfies \Cref{ass:regular,ass:regular}.
	\begin{itemize}
		\item If \eqref{eq:DDAE} is not advanced, then the DDAE \eqref{eq:DDAE} is not of \advanced{}. 
		\item If the DDAE \eqref{eq:DDAE} is advanced, then it is of \advanced{}.
	\end{itemize}
\end{corollary}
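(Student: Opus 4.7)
The plan is to reduce the statement to a direct comparison of the two algebraic characterizations already in hand. The proposition immediately preceding this corollary characterizes the classification of \Cref{def:HaClassification} through $\Ba$ and $\N\Ba$, and \Cref{thm:classificationDDAE} characterizes the new classification through the powers $\N^k\Ba$. So the only thing to check is that these two characterizations line up in the way claimed.

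First I would unpack the side of \Cref{def:HaClassification}: by the preceding proposition, \eqref{eq:DDAE} is advanced in that sense precisely when it is neither retarded ($\Ba = 0$) nor neutral ($\Ba \neq 0$ and $\N\Ba = 0$). Collapsing these two exclusion clauses shows that advancedness in the sense of \Cref{def:HaClassification} is equivalent to the single condition $\N\Ba \neq 0$. Next I would recall from \Cref{thm:classificationDDAE} that \eqref{eq:DDAE} is of \advanced{} exactly when there exists $k \in \mathbb{N}$ with $\N^k\Ba \neq 0$.

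The heart of the argument is then the elementary observation that these two conditions coincide. One direction is immediate, since $\N\Ba \neq 0$ already provides the witness $k = 1$. For the converse, if $\N\Ba = 0$ then $\N^k\Ba = \N^{k-1}(\N\Ba) = 0$ for every $k \geq 1$, so no witness exists. From this equivalence both bullets follow at once: the first bullet is the contrapositive of the implication ``$\N\Ba \neq 0 \Rightarrow$ of \advanced{}'', and the second bullet is that implication itself.

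I do not expect any real obstacle here; the substantive work has already been carried out in \Cref{thm:classificationDDAE} and the characterizing proposition, and this corollary is essentially a bookkeeping alignment of the two matrix conditions, packaged into the ``upper bound'' statement advertised in the introduction.
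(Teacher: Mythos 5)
Your argument is correct and is exactly the reasoning the paper intends: the paper gives no explicit proof, stating only that the corollary follows ``immediately'' from the preceding proposition together with \Cref{thm:classificationDDAE}, and your reduction of both classifications to the single condition $\N\B_\mathrm{a}\neq 0$ (using that $\N\B_\mathrm{a}=0$ forces $\N^k\B_\mathrm{a}=0$ for all $k\in\mathbb{N}$) is precisely that comparison. The only blemish is the final attribution of which bullet is the contrapositive of which implication, which is harmless since you establish the equivalence in both directions.
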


Since the classification introduced in this paper is based on the worst-case scenario, the numerical method described in \cite{HaM15}, which is formulated for DDAEs that are not advanced, is safe to use.

\begin{remark}
	The numerical method introduced in \cite{HaM15} is tailored to DDAEs that are not advanced and cannot be used for advanced DDAEs. However, if it is known that the history function satisfies splicing conditions of the form \eqref{eq:smoothnessCondition} and \eqref{eq:secondSplicingCondition}, then also advanced DDAEs may be solved (cf. \Cref{thm:index3}). Thus, there is a need for numerical integration schemes that can handle such situations. This is subject to further research.
\end{remark}

\section{Summary}
In this paper we have studied the propagation of primary discontinuities in initial value problems for delay differential-algebraic equations. Based on the the different possible propagation types we have introduced a new classification for DDAEs and developed a complete characterization in terms of the coefficient matrices. Moreover, the analysis shows that hidden delays may be possible in DDAEs and we have introduced a systematic way to reformulate the DDAE in terms of these delays. As a consequence, we showed that the stability analysis for such DDAEs can be performed by computing the spectral abscissa. In addition, we have studied the impact of splicing conditions on the classification and derived sufficient conditions for DDAEs of index less or equal three to have a unique solution.

\bibliographystyle{plain}
\bibliography{ClassificationDDAE}    

\end{document}